\numberwithin{equation}{section}
\newtheorem{theorem}{Theorem}[section]
\newtheorem{lemma}[theorem]{Lemma}
\newtheorem{proposition}[theorem]{Proposition}
\newtheorem{corollary}[theorem]{Corollary}
\theoremstyle{definition}
\theoremstyle{remark}
\newtheorem{remark}[theorem]{Remark}}
\theoremstyle{remark}{
\newtheorem{example}[theorem]{Example}}
\newcommand{\Z}{\mathbb{Z}}
\newcommand{\Q}{\mathbb{Q}}
\newcommand{\tree}{\mathcal{T}}
\newcommand{\atm}{\mathcal{A}}
\DeclareMathOperator{\Stab}{Stab}
\DeclareMathOperator{\Aut}{Aut}
\definecolor{darkgreen}{rgb}{0,0.6,0}
\newcommand{\optionalarg}[2]{
\ifthenelse{\equal{#2}{}}{%
#1}{%
#1(#2)}
}
\begin{document}

\title[Lamplighter groups, bireversible automata and rational series]{Lamplighter groups, bireversible automata and rational series over finite rings}
\date{\today}
\subjclass[2010]{Primary 20F65;
                 Secondary 20E08, 20F10}

\keywords{Automata groups, lamplighter groups, reversible automata,  bireversible automata}

\author[R.~Skipper]{Rachel Skipper}
\address{Department of Mathematics, The Ohio State University, Columbus, OH 43210}
\email{skipper.26@osu.edu}

\author[B.~Steinberg]{Benjamin Steinberg}
\address{Department of Mathematics, City College of New York, New York, NY  10031}
\email{bsteinberg@ccny.cuny.edu}

\thanks{The first author was partially supported by a grant from the
Simons Foundation (\#245855 to Marcin Mazur). The second author was supported by NSA MSP \#H98230-16-1-0047 and a PSC-CUNY grant}

\begin{abstract}
We realize lamplighter groups $A\wr \mathbb Z$, with $A$ a finite abelian group, as automaton groups via affine transformations of power series rings with coefficients in a finite commutative ring. Our methods can realize $A\wr \mathbb Z$ as a bireversible automaton group if and only if the $2$-Sylow subgroup of $A$ has no multiplicity one summands in its expression as a direct sum of cyclic groups of order a power of $2$.
\end{abstract}

\maketitle

\section{Introduction}\label{sec:intro}
In the process of classifying all two-state automaton groups over a two-letter alphabet, Grigorchuk and \.{Z}uk discovered that the lamplighter group $\mathbb Z/2\Z\wr \Z$ can be realized as an automaton group~\cite{gz01}.  Moreover, they used this realization to compute the spectral measure for a simple random walk on the group with respect to the generating set arising from the automaton.  This in turn led to the first counterexample to the strong form of the Atiyah conjecture on $\ell_2$-Betti numbers~\cite{GLSZ00}.  The proof of Grigorchuk and \.{Z}uk that their automaton generated a lamplighter groups was computational, making use of the wreath product representation of automaton groups.  A more conceptual proof, using affine transformations of the power series ring over the two-element field, appeared in~\cite{GNS00}.  This paper also realized lamplighter groups of the form $(\Z/p\Z)^n\wr \Z$ with $p$ prime using series over fields. Here, by a lamplighter group we mean a group of the form $F\wr \Z$ where $F$ is a finite group.

Dicks and Schick computed spectral measures for random walks on arbitrary lamplighter groups $F\wr \Z$ with respect to a set of generators inspired by the automaton generators of Grigorchuk and \.{Z}uk~\cite{DS02}.  With respect to this generating set, the Cayley graph of $F\wr \Z$ depends (up to isomorphism) only on $|F|$ and not the group structure of $F$.  Their approach avoided automata completely.  The second author and Silva realized all lamplighter groups $A\wr \Z$ with $A$ a finite abelian group in~\cite{ss05} by using affine transformations of power series rings over finite commutative rings.  The automaton generators in this case are exactly those considered by Dicks and Schick and so the second author, together with Kambites and Silva,   exploited this automaton realization in~\cite{KSS06} to give a new proof of the results of Dicks and Schick using the original scheme of Grigorchuk and \.{Z}uk.
We mention another early paper realizing lamplighter groups was~\cite{BS06}.

Since then there has been further study of automaton group realizations of lamplighter groups, particularly in connection with power series.  For example, Savchuk and Sidki~\cite{SS16} have recently studied realizations of lamplighter groups as affine transformations over power series rings with coefficients in $\Z/d\Z$ and they provide a detailed study of all such representations over the binary tree.  The lamplighter group $\mathbb Z/2\Z\wr \Z$ is represented as a bounded automaton group in~\cite{JSW17}.  The recent paper~\cite{BS18} constructs $A\wr \Z^d$ as an automaton group for any finite abelian group $A$ and $d\geq 1$.

An important class of automaton groups is the class of bireversible automaton groups~\cite{MNS00, nek05}.  These are the automaton groups for which both the automaton semigroups of the dual automaton and the dual of the inverse automaton are actually groups.  One salient feature of bireversible automaton groups is that they automatically act essentially freely on the boundary of the rooted tree~\cite{svv11} and hence, by the results of~\cite{KSS06}, one can potentially compute spectral measures for their random walks via the action on the tree.  Bireversible automata were used for the first constructions of free groups and virtually free groups as automaton groups~\cite{GM05,nek05,vv07,vv10,sv11,svv11}. Note that if a group generated by a bireversible automaton contains an element of infinite order, then it contains a nonabelian free subsemigroup and thus has exponential word growth~\cite{Kli18, FM18}.


For a long time, most of the examples of bireversible automaton groups were virtually torsion-free and so it was quite surprising when Bondarenko, D’Angeli and Rodaro~\cite{bdr16} found a bireversible automaton group isomorphic to $\Z/3\Z\wr \Z$.  Their original proof was using wreath product representations but Bondarenko and Savchuk have recently announced a construction via affine transformations of the ring of power series over $\Z/3\Z$. Multiplication by a rational power series over a field can always be implemented by a finite state automaton.  Bondarenko and Savchuk announced a complete description of all automaton groups generated by the states of such automata (they are always lamplighter groups if the series is invertible and non-constant), as well as their dual automaton semigroups.  In particular, they characterize when such automaton groups are bireversible. Such an approach was also used by Ahmed and Savchuk~\cite{AS18} to realize $(\Z/2\Z)^2\wr \Z$ as a bireversible automaton group.

In this paper, like in~\cite{ss05}, we consider rational power series over a finite commutative ring $R$.  More specifically, we consider invertible rational series of the form
\begin{equation}\label{eq:ourseries}
f(t)=r\left(\frac{1-at}{1-bt}\right)=r(1-at)\cdot \sum_{n=0}^{\infty}b^nt^n
\end{equation}
 where $r\in R^\times$ is a unit and $a,b\in R$. These are precisely the invertible rational series, which together with their inverses, have corresponding recurrence of degree at most $1$.  We explicitly construct an initial automaton $\atm_f$ computing left multiplication by $f(t)$.  Here we identify $R^{\omega}$ with $R\llbracket t\rrbracket$ in the obvious way.  We then consider the automaton group generated by the states of $\atm_f$.  We prove that if $a-b$ is a unit of $R$, then we obtain $R^+\wr \Z$, where $R^+$ is the additive group of $R$.  The special case where $r=1$, $a=0$ and $b=1$ recovers the construction in~\cite{ss05}.  Most likely if $a-b$ is not a unit, one never gets the lamplighter group $R^+\wr \Z$, but we prove this negative result explicitly only when $R=\Z/n\Z$.

The main result is that $\atm_f$ is a bireversible automaton generating a group isomorphic to $R^+\wr \Z$ if and only if $a$, $b$ and $a-b$ are units of $R$.  This then leads to the natural question of which finite abelian groups $A$ can be obtained as the additive group of a finite commutative ring $R$ with two units whose difference is a unit.  Note that $R$ has this property if and only if it has no ideal of index $2$.  Not all finite abelian groups can be realized this way.  For example $\Z/n\Z$ with $n$ even can never be realized as the additive group of such a ring.  We prove that $A$ is isomorphic to the additive group of a ring $R$ with no ideal of index $2$ if and only if the $2$-Sylow subgroup of $A$ can be expressed as a direct sum of cyclic groups of the form $(\Z/2^i\Z)^{r_i}$ with $r_i\geq 2$. Consequently, we can realize lamplighters $A\wr \Z$ as bireversible automaton groups whenever the $2$-Sylow subgroup of $A$ satisfies this condition.  We are hopeful that the extra symmetry appearing in a bireversible automaton might make it possible to perform spectral computations for these lamplighter groups with respect to other probability measures than the uniform measure on the automaton generating set.

The paper is organized as follows.  We begin by recalling some basic notions concerning automata, automaton groups and actions on rooted trees.  The next section discusses the action of multiplication by $f(t)$ from \eqref{eq:ourseries} on power series from this viewpoint and constructs its minimal automaton.  We study the group generated by the states of this automaton and prove that it is a lamplighter group when $a-b$ is a unit of $R$.  Before doing this, we review some of the basic theory of finite commutative rings, e.g., that they are direct products of local rings.  We also characterize reversibility and bireversibility of the automaton.  The following section characterizes the additive groups of finite commutative rings containing two units whose difference is a unit.  The final section provides some examples of our construction.

\section{Automata}\label{sec:background}
For background on automata groups, the reader is referred to the book of Nekrashevych~\cite{nek05} or the survey paper~\cite{GNS00}.
Let $X$ be a finite set called an \emph{alphabet} and let $X^*$ be the free monoid on $X$, that is, the set of finite words over the alphabet $X$ including the empty word, denoted $\varnothing$. The length of  a word $w\in X^*$ is denoted by $|w|$.  The Cayley graph of $X^*$ naturally has the structure of a regular rooted tree with $\varnothing$ as the root and two words $u$ and $v$ are connected by an edge if $ux=v$ or $u=vx$ for some $x\in X$, and so we denote the Cayley graph by $\tree_X$.

An \emph{automorphism of $\tree_X$} is a bijection from $X^*$ to $X^*$ which preserves edge incidences. The group of all automorphisms of $\tree_X$ is denoted $\Aut(\tree_X)$. Any automorphism of $\tree_X$ induces an action on the boundary of the tree which can be identified with the set of infinite words over $X$ and is denoted $X^\omega$.  Conversely, any permutation of $X^\omega$ which preserves the length of the longest common prefix of any pair of infinite words is induced by the action of a unique automorphism of $\tree_X$.

For $v\in X^*$ and $u\in X^\omega$, an automorphism $g\in \Aut(\tree_X)$ acts on $vu$ via
\[g(vu)=g(v)g|_v(u)\]
where $g(v)\in X^*$ with $|g(v)|=|v|$ and  $g|_v\in \Aut(\tree_X)$, depending only on $v$. We call $g|_v$ the \emph{state} of $g$ at $v$ (some authors use the term ``section''). An automorphism $g$ of $\tree_X$ is said to be \emph{finite state} if the set $\{g|_v : v\in X^*\}$ is finite. The following is a straightforward computation, taking the action by automorphisms to be a left action.

\begin{lemma}\label{lem:states}
Let $g,h\in \Aut(\tree_X)$.  Then the states of $gh$ and $g^{-1}$ at $v\in X^*$ are given by
\begin{align*}
(gh)|_v &=g|_{h(v)}h|_v\\
(g^{-1})|_v &= (g|_{g^{-1}(v)})^{-1}.
\end{align*}
\end{lemma}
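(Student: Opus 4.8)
The plan is to derive both formulas directly from the defining relation $g(vu) = g(v)\, g|_v(u)$, exploiting that the action is a left action, so that $(gh)(w) = g(h(w))$, together with the fact recorded above that an automorphism of $\tree_X$ is determined by the length-preserving permutation it induces on the boundary $X^\omega$.

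First I would treat the product. Fix $v\in X^*$ and an arbitrary $u\in X^\omega$, and compute $(gh)(vu)$ in two ways. Unfolding the composition and applying the defining relation first for $h$ at $v$ and then for $g$ at the vertex $h(v)$, which has length $|h(v)|=|v|$, gives
\begin{align*}
(gh)(vu) = g\bigl(h(vu)\bigr) = g\bigl(h(v)\, h|_v(u)\bigr) = g(h(v))\, g|_{h(v)}\bigl(h|_v(u)\bigr).
\end{align*}
On the other hand, the defining relation for $gh$ at $v$ reads $(gh)(vu) = (gh)(v)\, (gh)|_v(u)$. The two length-$|v|$ prefixes agree, both equalling $(gh)(v)=g(h(v))$, so comparing the infinite tails yields $(gh)|_v(u) = g|_{h(v)}\bigl(h|_v(u)\bigr)$ for every $u\in X^\omega$. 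Since an automorphism is determined by its boundary action, this identity of functions on $X^\omega$ forces $(gh)|_v = g|_{h(v)} h|_v$, the first formula.

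For the inverse I would simply specialize the product formula to $h=g^{-1}$ at the vertex $v$. Because $gg^{-1}$ is the identity automorphism, whose state at every vertex is the identity, and because $g^{-1}(v)$ is the unique word with $g(g^{-1}(v))=v$, the product formula reads $\mathrm{id} = (gg^{-1})|_v = g|_{g^{-1}(v)}\, (g^{-1})|_v$; solving for the last factor gives $(g^{-1})|_v = (g|_{g^{-1}(v)})^{-1}$. There is essentially no obstacle here. The only points needing a word of care are that $h(v)$ has the same length as $v$, so that the defining relation for $g$ may legitimately be applied at the vertex $h(v)$, and that states are genuinely determined by their boundary action, which is precisely the correspondence between length-preserving permutations of $X^\omega$ and automorphisms of $\tree_X$ recorded above.
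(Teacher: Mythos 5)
Your proof is correct, and it coincides with what the paper intends: the paper gives no proof of this lemma at all, dismissing it as ``a straightforward computation, taking the action by automorphisms to be a left action,'' and your two-way evaluation of $(gh)(vu)$ followed by specializing $h=g^{-1}$ (using that the identity has trivial states) is exactly that computation. The two points of care you flag --- that $|h(v)|=|v|$ so the defining relation applies at $h(v)$, and that an automorphism is determined by its boundary action --- are both already recorded in the paper's background section, so nothing further is needed.
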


It follows from the lemma that a composition of finite state automorphisms is again finite state and the inverse of a finite state automorphism is finite state.

A subgroup $G\leq \Aut(\tree_X)$ is said to be \emph{self-similar} if, for all $g\in G$ and $v\in X^*$, $g|_v$ is in $G$. For a vertex $v$, the stabilizer of $G$ at $v$, denoted $\Stab_G(v)$, is the set of $g\in G$ with $g(v)=v$. The group $G$ is \emph{self-replicating} if \[\{g|_v : g\in \Stab_G(v)\}=G\] for all $v\in X^*$ (actually, it is enough for all $v\in X$).
 It is \emph{spherically transitive} if for every $v$ and $w$ of same length over the alphabet, there exists $g\in G$ with $g(v)=w$.  Note that some authors include spherical transitivity as part of the definition of self-replicating.

A self-similar group is said to be an $\emph{automaton group}$ if it is finitely generated and every element of the group is finite state. In this case, a finite state Mealy automaton can be used to describe the generators.

A \emph{(Mealy) automaton} is $\atm$ is a 4-tuple $\atm=(Q, X, \delta, \lambda)$ where $Q$ is a finite set of states, $X$ is a finite alphabet, $\delta\colon  Q\times X \rightarrow Q$ is the \emph{transition function}, and $\lambda\colon  Q \times X \rightarrow X$ is the \emph{output function}. For each $q\in Q$ and $x\in X$, we will use the notation $\lambda_q(x)$ to mean $\lambda(q,x)$ and will call $\lambda_q$ the \emph{state function corresponding to $q$}. Similarly, for $x\in X$, we define $\delta_x\colon Q\to Q$ by  $\delta_x(q)=\delta(q,x)$.  When the set of states is finite, we say $\atm$ is a \emph{finite state automaton}.

It is common to describe an automaton $\atm$ by a directed, labeled graph with vertices labeled by $Q$ and edges
\[q \xrightarrow{\,\,x\, \mid\, \lambda_q(x)\,} \delta(q,x)\]
for each $q\in Q$ and $x\in X$.

The function $\lambda_q$ can be extended uniquely to a function on both the sets $X^*$ of finite words and $X^\omega$ of infinite words over $X$, which, abusing notation, we shall also refer to as $\lambda_q$. These extensions are described recursively as follows:
\[\lambda_q(x_0x_1 \cdots x_n)=\lambda_q(x_0)\lambda_{\delta(q,x_0)}(x_1 \cdots x_n)\]
and
\[\lambda_q(x_0x_1 \cdots)=\lim_{n\rightarrow \infty}\lambda_q(x_0x_1 \cdots x_n).\]  Intuitively, the input word labels the left hand side of a unique path starting at $q$ in the directed, labeled graph representing $\atm$ and the output is the label of the right hand side of this path.

For each $g\in \Aut(\tree_X)$, there is a unique minimal automaton $\atm_g$ and state $q$ such that $g=\lambda_q$.  The state set is given by $\{g|_v: v\in X^*\}$, the transition function is given by $\delta(g|_v,x)=g|_{vx}$ and the output function is given by $\lambda(g|_v,x) = g|_v(x)$.  One then has $g=\lambda_{g|_{\varnothing}}$.  Moreover, $\atm_g$ is finite if and only if $g$ is finite state.

An automaton $\atm$ is \emph{invertible} if, for each $q$, $\lambda_q$ is a permutation of the alphabet. Invertible automata are precisely the automata for which each state function describes an automorphism of the tree $\tree_X$ with vertex set $X^*$. For an automaton $\atm=(Q, X, \delta, \lambda)$, the \emph{inverse automaton} $\atm^{-1}$ is obtained by switching the input and output letters on the edge labels. In this case, the inverse to $\lambda_q$ is computed by the state corresponding to $q$ in $\atm^{-1}$.  Note that, for $g\in \Aut(\tree_X)$, the minimal automaton for $g^{-1}$ is the inverse of the minimal automaton for $g$, as is easily seen from Lemma~\ref{lem:states}.

The \emph{dual automaton} $\partial \atm$ is given by $(X, Q, \lambda, \delta)$, i.e., the alphabet and states are interchanged and the output and transition functions are interchanged. An invertible automaton is called \emph{reversible} if its dual is invertible and \emph{bireversible} if it is reversible and additionally its inverse is reversible. Note that some authors do not require $\atm$ to be invertible in order to be reversible. The state functions of $\partial \atm$ are precisely the functions $\delta_x$, with $x\in X$, and hence $\atm$ is reversible if and only if $\delta_x$ is a permutation of the state set for each $x\in X$.

For an invertible finite state automaton $\atm$, the group generated by $\{\lambda_q : q\in Q \}$ under the operation of composition is called the \emph{automaton group} $\mathbb{G}(\atm)$.  It is a self-similar group and it is an automaton group in the sense defined above.  The group generated by a bireversible automaton enjoys the property that its action on $X^{\omega}$ is always essentially free~\cite{svv11}, whereas the action of a reversible automaton is sometimes essentially free and sometimes not~\cite{KSS06}.  The action  on $X^\omega$ is \emph{essentially free} if the stabilizer of an infinite word is almost surely trivial with respect to the Bernoulli measure.  This is exactly the property that one needs for the spectral measure of the simple random walk on the Cayley graph to be computable as the limit of the Kesten-von Neumann-Serre spectral measures~\cite{GZ04} for the random walks on the levels of the tree in the spherically transitive case, see~\cite{KSS06} for further details.

\section{Power series and lamplighter groups}
By a \emph{lamplighter group} we mean a restricted wreath product $F\wr \mathbb Z=\bigoplus_{\mathbb Z}F\rtimes \mathbb Z$ with $F$ a finite group.  Some authors refer to only the particular case $(\mathbb Z/2\mathbb Z)\wr \mathbb Z$ as the lamplighter group.
In this section we use the language of formal power series to produce automata which generate lamplighter groups of the form $A\wr \mathbb Z$ with $A$ a finite abelian group.  This was first done for $A=\mathbb Z/2\mathbb Z$ by Grigorchuk and \.{Z}uk~\cite{gz01} (where power series were not used, but see~\cite{GNS00} for a proof using power series) and for arbitrary finite abelian groups by the second author and Silva~\cite{ss05}. We find conditions on the power series that will guarantee that the automaton is reversible or bireversible.  Related work, via power series over fields, has been announced by Bondarenko and Savchuk at various conferences.  Their work was announced before ours, and considers rational power series with higher order recurrences than ours.  However, by working over rings we can realize many more lamplighter groups as bireversible automata than can be realized over a field.

Let $R$ be a finite commutative ring with unity, $R^+$ its additive group, $R^\times$ its multiplicative group of units, and $R\llbracket t\rrbracket$ the ring of formal power series with coefficients in $R$.  Note that $f\in R\llbracket t\rrbracket$ is a unit if and only if its constant term $f(0)$ is a unit of $R$. We will identify $R^\omega$ with $R\llbracket t\rrbracket$ via
\[(r_0,r_1,r_2,\ldots) \longmapsto r_0 +r_1t+r_2t^2+\cdots\] where on the left hand side we write elements of $R^{\omega}$ as tuples to avoid confusion between concatenation of words and the multiplication in $R$.

For any power series $f(t)$ in $R\llbracket t\rrbracket$ we define two mappings of $R^\omega$ given by
\[ \mu_f\colon  g(t)\longmapsto f(t)g(t)\]
and
\[\alpha_f\colon  g(t)\longmapsto f(t)+g(t).\]

Note that $\alpha_f$ is invertible with inverse $\alpha_{-f}$ and preserves the length of the longest common prefix. Thus $\alpha_f$ gives an automorphism of the tree $\tree_R$ with vertex set $R^*$. On the other hand, $\mu_f$ is invertible precisely when $f$ is a unit of $R\llbracket t\rrbracket$. In this case $(\mu_f)^{-1}=\mu_{f^{-1}}$ and both these mappings preserve the length of the longest common prefix.  It is well known to automata theorists that if $f$ is a rational power series, that is, $f(t)=p(t)/q(t)$ with $p(t),q(t)$ polynomials and $q(0)\neq 0$, then $\alpha_f,\mu_f$ are finite state. We shall compute the minimal automaton for $\mu_f$ in the case that $p(t),q(t)$ are linear, momentarily.

 The following proposition illustrates the relationship between $\mu$ and $\alpha$.

\begin{proposition}\label{prop:conj}
For any $n\in \Z$ and $f(t), h(t)\in R\llbracket t\rrbracket$, $\mu_{f}\alpha_h\mu_{f^{-1}}=\alpha_{fh}$.
\end{proposition}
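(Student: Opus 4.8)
The plan is to verify the identity by direct computation, applying both sides to an arbitrary series $g(t)\in R\llbracket t\rrbracket$ and using nothing beyond the ring axioms in $R\llbracket t\rrbracket$. Since $\mu_{f^{-1}}$ occurs on the right, I first record that the statement is meaningful precisely when $f$ is a unit of $R\llbracket t\rrbracket$ (equivalently $f(0)\in R^\times$), so that $f^{-1}$ and $\mu_{f^{-1}}=(\mu_f)^{-1}$ are defined; the integer $n$ does not appear in the formula and can be disregarded.

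Reading the composition as a left action, so that it is evaluated from right to left, I would compute
\begin{align*}
(\mu_f\alpha_h\mu_{f^{-1}})(g) &= \mu_f\bigl(\alpha_h(f^{-1}g)\bigr)\\
&= \mu_f\bigl(h+f^{-1}g\bigr)\\
&= f\cdot\bigl(h+f^{-1}g\bigr).
\end{align*}
The single substantive step is then to expand the last line using distributivity of multiplication over addition in $R\llbracket t\rrbracket$ together with $ff^{-1}=1$, which gives $fh+(ff^{-1})g=fh+g=\alpha_{fh}(g)$. As $g$ was arbitrary, the two maps coincide, which is exactly the claim.

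There is essentially no obstacle here: the identity is a formal consequence of associativity and distributivity in the commutative ring $R\llbracket t\rrbracket$, and it expresses the familiar fact that multiplication by $f$ conjugates translation by $h$ into translation by $fh$. The only points requiring care are bookkeeping the order of composition---ensuring that $\mu_{f^{-1}}$ is applied first and $\mu_f$ last---and keeping in force the standing assumption that $f$ is invertible so that every map in the expression is well defined. One could equally phrase the argument without a test element, observing directly that $\mu_f\circ\alpha_h\circ\mu_{f^{-1}}$ sends $g\mapsto f(h+f^{-1}g)=fh+g$; both formulations reduce to the same one-line computation.
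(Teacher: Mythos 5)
Your computation is exactly the paper's proof: apply both sides to a test series $g$, obtaining $\mu_f\alpha_h\mu_{f^{-1}}(g)=\mu_f(f^{-1}g+h)=g+fh=\alpha_{fh}(g)$. Your side remarks are also apt---the hypothesis that $f$ is a unit of $R\llbracket t\rrbracket$ is indeed needed for $\mu_{f^{-1}}$ to make sense (the paper assumes this tacitly from the surrounding discussion), and the ``$n\in\Z$'' in the statement is a vestigial quantifier playing no role.
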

\begin{proof}
If $g(t)\in R\llbracket t\rrbracket$, then we compute $\mu_{f}\alpha_h\mu_{f^{-1}}(g)=\mu_f\alpha_h(f^{-1}g)=\mu_f(f^{-1}g+h)=g+fh=\alpha_{fh}(g)$.
\end{proof}

For any power series $f(t)=\sum_{i=0}^\infty c_i t^i$ we define also the \emph{shift of $f$} by \[\sigma(f)=\sum_{i=0}^\infty c_{i+1} t^i=c_1+c_2t+c_3t^2+\cdots\] so that $f(t)=c_0+\sigma(f)t$.

We wish to study rational power series of the form \[f(t)=\frac{r_1-r_2t}{r_3-r_4t}.\] In order for $f(t)$ to be a unit of $R\llbracket t\rrbracket$, $r_1$ and $r_3$ must be units of $R$, and so we will write $f(t)$ as \[f(t)= r\left(\frac{1-at}{1-bt}\right)\] for $a$ and $b$ in $R$ and $r\in R^\times$. With this notation
\[f(t)=r +r(b-a)t+ rb(b-a)t^2+rb^2(b-a)t^3+rb^3(b-a)t^4+\cdots\] as is easily checked.

\begin{lemma}\label{lem:shift}
Let \[f(t)=r\left(\frac{1-at}{1-bt}\right)\] where $r\in R^\times$ and $a,b\in R$. Then $\sigma(f)=bf-ra$.
\end{lemma}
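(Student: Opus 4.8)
The plan is to avoid comparing the infinitely many coefficients in the displayed expansion of $f$ and instead argue from the defining rational identity. The starting observation is that the displayed relation $f(t)=c_0+\sigma(f)t$ (with $c_0$ the constant term of $f$) already isolates $\sigma(f)$ up to the factor $t$: evaluating $f$ at $t=0$ in the formula $f(t)=r(1-at)/(1-bt)$ gives $c_0=f(0)=r$, so that $f-r=\sigma(f)\,t$. Thus it suffices to show that $f-r$ equals $t(bf-ra)$.

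To see this I would clear the denominator in the definition of $f$. Multiplying $f(t)=r(1-at)/(1-bt)$ through by $1-bt$ yields the identity $(1-bt)f=r(1-at)$ in $R\llbracket t\rrbracket$. Expanding both sides gives $f-btf=r-rat$, which rearranges to $f-r=btf-rat=t(bf-ra)$. Comparing with the previous paragraph, this produces $\sigma(f)\,t=t(bf-ra)$.

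The final step is to cancel the common factor of $t$. Since multiplication by $t$ merely raises every coefficient by one degree, it is injective on $R\llbracket t\rrbracket$; in particular $t$ is not a zero divisor, so the cancellation is legitimate and we conclude $\sigma(f)=bf-ra$. The computation is short and presents no genuine obstacle; the only point meriting a word of care is precisely this cancellation, i.e.\ the fact that $t$ is a non-zero-divisor in the power series ring. (Alternatively, one could read off the coefficients directly from the given expansion $f=r+r(b-a)t+rb(b-a)t^2+\cdots$ and verify the identity termwise, but the functional-equation argument above is cleaner and makes the structural reason transparent.)
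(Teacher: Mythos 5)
Your proof is correct, but it takes a genuinely different route from the paper's. The paper argues by brute-force coefficient comparison: it takes the displayed expansion $f=r+r(b-a)t+rb(b-a)t^2+rb^2(b-a)t^3+\cdots$, computes $bf-ra$ term by term, and observes that the resulting series is exactly $\sigma(f)$. You instead work from the defining rational identity: clearing the denominator gives $(1-bt)f=r(1-at)$, which rearranges to $f-r=t(bf-ra)$; combining this with $f=f(0)+\sigma(f)\,t=r+\sigma(f)\,t$ yields $\sigma(f)\,t=t(bf-ra)$, and you cancel $t$ after correctly noting that multiplication by $t$ is injective on $R\llbracket t\rrbracket$ (this is indeed the one step needing a word of justification, and you supply it). What each approach buys: the paper's check is entirely self-contained given the expansion it has already displayed, and requires no cancellation argument; your functional-equation argument is more structural and generalizes immediately --- for a rational series $p(t)/q(t)$ with higher-degree $p,q$, the same denominator-clearing manipulation produces the shift recurrence, where a termwise verification would become unwieldy. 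Note also that your identity $(1-bt)f=r(1-at)$ is legitimate precisely because $1-bt$ has unit constant term and is therefore invertible in $R\llbracket t\rrbracket$, which is consistent with how the paper defines $f$ via $f(t)=r(1-at)\cdot\sum_{n=0}^{\infty}b^nt^n$.
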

\begin{proof}
Observe that
\begin{align*}
bf-ra &=b(r +r(b-a)t+ rb(b-a)t^2+rb^2(b-a)t^3+\cdots)-ra\\
&=br-ra + rb(b-a)t+rb^2(b-a)t^2+ rb^3(b-a)t^3 +\cdots\\
&=r(b-a)+ rb(b-a)t+rb^2(b-a)t^2+ rb^3(b-a)t^3+\cdots\\
&=\sigma(f).
\end{align*}
\end{proof}

\begin{proposition}\label{prop:statesoff}
Let \[f(t)=r\left(\frac{1-at}{1-bt}\right)\] where $r\in R^\times$ and $a,b\in R$. Then $\mu_f$ is finite state with set of states $\{\alpha_{-sra}\mu_f\alpha_{sb}: s\in R\}$. Moreover, for any $s\in R$, the state $\alpha_{-sra}\mu_f\alpha_{sb}$ permutes the degree zero terms via:
\[\tilde{s} \longmapsto r(\tilde s+(b-a)s).\]
\end{proposition}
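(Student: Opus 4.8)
The plan is to first identify the single state $\mu_f|_{c_0}$ at an arbitrary letter $c_0\in R$ as an element of the proposed set, then bootstrap to \emph{all} states using the composition rule of Lemma~\ref{lem:states}, and finally read off the action on degree zero terms by direct bookkeeping. Throughout I use that $f$ is a unit of $R\llbracket t\rrbracket$ (its constant term is $r\in R^\times$), so $\mu_f$ is a genuine automorphism of $\tree_R$ and its states are well defined.

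\textbf{Step 1 (the key state identity).} Writing $g=c_0+\sigma(g)t$ with $c_0=g(0)$ and recalling $f=r+\sigma(f)t$, a short expansion of the product gives the shift-of-a-product formula $\sigma(fg)=c_0\,\sigma(f)+f\,\sigma(g)$. Substituting $\sigma(f)=bf-ra$ from Lemma~\ref{lem:shift} and regrouping yields
\[\sigma(fg)=f\bigl(c_0 b+\sigma(g)\bigr)-c_0 ra.\]
Since $\mu_f$ sends the first letter $c_0$ to $rc_0$, its state at $c_0$ is the unique automorphism with $\mu_f|_{c_0}(\sigma(g))=\sigma(fg)$ for all suffixes $\sigma(g)$. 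The displayed identity says precisely that $\mu_f|_{c_0}$ first adds the constant $c_0 b$, then multiplies by $f$, then adds $-c_0 ra$; that is, $\mu_f|_{c_0}=\alpha_{-c_0 ra}\,\mu_f\,\alpha_{c_0 b}$. I expect this to be the main obstacle: everything afterward is formal, but deriving the product-shift formula and combining it cleanly with Lemma~\ref{lem:shift} is where the real content lies.

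\textbf{Step 2 (closure and finiteness).} Set $S=\{\alpha_{-sra}\mu_f\alpha_{sb}:s\in R\}$. Each $\alpha_c$ with $c$ a constant is a \emph{rooted} automorphism: it adds $c$ to the first letter and fixes every suffix, so $\alpha_c|_x=\mathrm{id}$ for all $x\in R$ while $\alpha_c(x)=x+c$ on first-level vertices. Taking $g=\alpha_{-sra}\mu_f\alpha_{sb}\in S$ and applying Lemma~\ref{lem:states} twice, the two rooted factors contribute trivial states and merely shift the letter, leaving
\[g|_x=\mu_f|_{x+sb}=\alpha_{-(x+sb)ra}\,\mu_f\,\alpha_{(x+sb)b}\in S.\]
Thus $S$ is closed under taking states. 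As $\mu_f=\alpha_{-0\cdot ra}\mu_f\alpha_{0\cdot b}\in S$ and $|S|\le|R|<\infty$, it follows that $\mu_f$ is finite state with all states lying in $S$; conversely the level-one states $\mu_f|_s$ from Step~1 realize every element of $S$, so the state set is exactly $S$.

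\textbf{Step 3 (action on degree zero terms).} For $g=\alpha_{-sra}\mu_f\alpha_{sb}$ I would track the constant term through the three maps, reading right to left in the left-action convention: $\alpha_{sb}$ sends $\tilde s\mapsto\tilde s+sb$, then $\mu_f$ multiplies the constant term by $r$, and then $\alpha_{-sra}$ subtracts $sra$. This gives $\tilde s\mapsto r(\tilde s+sb)-sra=r\bigl(\tilde s+(b-a)s\bigr)$, as claimed. The only care needed is to compose the three maps in the correct order consistent with Lemma~\ref{lem:states}.
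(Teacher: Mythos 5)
Your proposal is correct and follows essentially the same route as the paper: the product-shift identity $\sigma(fg)=c_0\sigma(f)+f\sigma(g)$ combined with Lemma~\ref{lem:shift} is exactly the paper's inline expansion of $(r+\sigma(f)t)(d_0+\sigma(g)t)$, and your closure argument via Lemma~\ref{lem:states} with the rooted automorphisms $\alpha_{-sra},\alpha_{sb}$ matches the paper's second step verbatim. Your added remark that the level-one states already realize every element of $S$ (so the state set is exactly $S$, not merely contained in it) is a small point the paper leaves implicit, and it is a welcome touch of care.
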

\begin{proof}
Let $g(t)=\sum_{i=0}^\infty d_it^i=d_0 +\sigma(g)t$. Then by Lemma~\ref{lem:shift}
\begin{align*}
\mu_f(g(t))&=(r+\sigma(f)t)(d_0+\sigma(g)t)\\
&=rd_0+ f\sigma(g)t+d_0\sigma(f)t\\
&=rd_0+f\sigma(g)t+d_0(bf-ra)t\\
&=rd_0 +[-d_0ra+f\sigma(g)+fd_0b]t\\
&=rd_0+[-d_0ra+f(\sigma(g)+d_0b)]t\\
&=rd_0+[\alpha_{-d_0ra}\mu_f\alpha_{d_0b}(\sigma(g))]t.
\end{align*}
Therefore, the state of $\mu_f$ at the vertex $d_0$ is given by $\alpha_{-d_0ra}\mu_f\alpha_{d_0b}$ and is of the desired form and $\mu_f(d_0)=rd_0$.

Now it just remains to show that the states of $\alpha_{-sra}\mu_f\alpha_{sb}$ for words of length $1$ have the same form.
Note that the states of $\alpha_{-sra}$ and $\alpha_{sb}$ at every word of length greater than $0$ are trivial since, for any $d\in R$, we have
\[\alpha_d(c_0+c_1t+c_2t^2+\cdots) = (d+c_0)+c_1t+c_2t^2+\cdots .\] Applying Lemma \ref{lem:states}, we see that, for any $d_0\in R$,
\begin{align*}
(\alpha_{-sra}\mu_f\alpha_{sb})|_{d_0}&=(\alpha_{-sra}\mu_f)|_{d_0+sb}\\
&=(\alpha_{-sra})|_{r(d_0+sb)}(\mu_f)|_{d_0+sb}\\
&=\alpha_{-(d_0+sb)ra}\mu_f\alpha_{(d_0+sb)b}.
\end{align*}

Therefore, the state of $\alpha_{-sra}\mu_f\alpha_{sb}$ at $d_0$ given by $\alpha_{-(sb+d_0)ra}\mu_f\alpha_{(sb+d_0)b}$ is again  of the desired form.

Finally, it is straightforward to check that if $g(t)$ is a power series with constant term $\tilde{s}$ then the constant term of $\alpha_{-sra}\mu_f\alpha_{sb}(g)$ is $r(\tilde s+(b-a)s)$.  Indeed, since $\mu_f$ acts on the root of $\tree_R$ via multiplication by $r$, we have that $\alpha_{-sra}\mu_f\alpha_{sb}(\tilde s) = -sra+r(\tilde s+sb)=r(\tilde s+(b-a)s)$.
\end{proof}

The above proposition tells us that we can associate to $\mu_f$ a finite state automaton whose state functions are given by the states of $\mu_f$. In other words, define $\atm_f=(Q, X, \delta, \lambda)$ with states $Q=\{\alpha_{-sra}\mu_f\alpha_{sb}: s\in R\}$ and alphabet $X=R$. The transition function $\delta$ is given by
\begin{equation}\label{eq:transition.func}
\delta(\alpha_{-sra}\mu_f\alpha_{sb}, \tilde{s})=\alpha_{-(sb+\tilde{s})ra}\mu_f\alpha_{(sb+\tilde{s})b}
\end{equation}
 and the output function by
\begin{equation}\label{eq:output.func}
\lambda(\alpha_{-sra}\mu_f\alpha_{sb}, \tilde{s})=r(\tilde s+(b-a)s).
\end{equation}
The state function $\lambda_{\mu_f}$ is precisely $\mu_f$ and, more generally, $\lambda_{\alpha_{-sra}\mu_f\alpha_{sb}}=\alpha_{-sra}\mu_f\alpha_{sb}$ for $s\in R$.

We now review some basic properties of finite commutative rings. It is well known that if a commutative ring with unity is Artinian, then it is a finite direct product of Artinian commutative local rings (cf.~\cite[Cor.~2.16]{Eis95}). Since $R$ is a finite ring it is clearly Artinian and so we shall write $R= R_1 \times \cdots \times R_n$ where $R_1, \ldots, R_n$ are local rings. Let $e_i=(0, \ldots, 0, 1, 0, \ldots, 0)$ with $1$ in the $i$-th position so that $\sum_{i=1}^n e_i=1$. In fact, $e_1,\ldots, e_n$ form a complete set of orthogonal primitive idempotents of $R$.

For any $R$-module $M$, let $M_i=e_iM$; it is an $R_i$-module. Then $M= M_1\times \cdots \times M_n$ and the action of $R$ is coordinatewise: \[(r_1,\ldots,r_n)(m_1,\ldots, m_n) = (r_1m_1,\ldots,r_nm_n).\] It is clear that a subset $A$ of $M$ is linearly independent over $R$ if and only if, for each $i$, its projection $e_iA$ into $M_i$ is linearly independent over $R_i$.

Recall now that, for a commutative Artinian local ring $S$, the unique maximal ideal $\mathfrak{m}$ consists of the nilpotent elements of $S$ and that $S^\times=S\backslash \mathfrak{m}$.

\begin{proposition}\label{prop:fmlinind}
Let \[f(t)=r\left(\frac{1-at}{1-bt}\right)\] where $r\in R^\times$ and $a,b\in R$. Then the set $\{f^m : m\in \Z\}$ is linearly independent over $R$ if and only if $a-b\in R^\times$.
\end{proposition}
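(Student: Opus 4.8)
The plan is to strip the doubly-infinite family down to finitely many genuine polynomials and then decide everything with a single determinant. Write $u=1-at$ and $v=1-bt$, so that $f=ruv^{-1}$ and $f^m=r^m u^m v^{-m}$ for all $m\in\Z$; both $u,v$ are units of $R\llbracket t\rrbracket$. The key reduction is \emph{clearing denominators}: a relation $\sum_{m=-N}^N c_m f^m=0$, multiplied by the unit $u^Nv^N$, becomes the polynomial identity $\sum_{j=0}^{2N} d_j\,u^j v^{2N-j}=0$ in $R[t]$ with $d_j=r^{\,j-N}c_{j-N}$, and since $u^Nv^N$ and $r$ are units this is an equivalence with $d_j=0\iff c_{j-N}=0$. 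Hence $\{f^m:m\in\Z\}$ is $R$-linearly independent if and only if, for every $N\ge1$, the $2N+1$ polynomials $\{u^jv^{2N-j}:0\le j\le 2N\}$ are independent in the rank-$(2N{+}1)$ free module of polynomials of degree $\le 2N$.

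For the independence direction, assume $a-b\in R^\times$. I would homogenize with a new variable $s$, setting $\tilde u=s-at$ and $\tilde v=s-bt$; dehomogenizing $s\mapsto 1$ is an isomorphism from the degree-$2N$ homogeneous part of $R[s,t]$ onto the polynomials of degree $\le 2N$, carrying $\tilde u^j\tilde v^{2N-j}$ to $u^jv^{2N-j}$. The transition matrix from $\{s,t\}$ to $\{\tilde u,\tilde v\}$ has determinant $a-b\in R^\times$, so $\{\tilde u,\tilde v\}$ is a free basis of the degree-one part and the substitution $s\mapsto\tilde u$, $t\mapsto\tilde v$ is an automorphism of $R[s,t]$. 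Thus $R[s,t]=R[\tilde u,\tilde v]$ is a polynomial ring, its monomials $\tilde u^j\tilde v^{2N-j}$ are independent for every $N$, and the reduction above yields independence of $\{f^m\}$.

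For the dependence direction it is enough to use $N=1$. Independence of $\{f^m\}$ would in particular force independence of $v^2,\,uv,\,u^2$; expressing these in the basis $1,t,t^2$ and computing the $3\times3$ determinant yields $\pm(a-b)^3$. Because $R$ is finite, an injective $R$-linear endomorphism of $R^3$ is bijective and hence has unit determinant; so independence of $v^2,uv,u^2$ would make $\pm(a-b)^3$, and therefore $a-b$, a unit. Contrapositively, if $a-b\notin R^\times$ then $v^2,uv,u^2$ are dependent, and clearing denominators turns this into a nontrivial relation among $f^{-1},f^0,f^1$.

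The only genuinely ring-theoretic point, and the step I expect to be the crux, is this dependence direction. Over a field it is transparent: if $a\ne b$ then $f$ is a non-constant element of the rational function field and hence transcendental over that field, so its powers are independent, whereas if $a=b$ then $f$ is constant; but over a finite ring ``$a-b$ is a non-unit'' no longer forces $f$ to be constant, so one must produce an actual relation. The determinant $\pm(a-b)^3$ together with the finiteness of $R$ --- equivalently, the fact that a non-unit of a finite commutative ring is a zero divisor --- is precisely what manufactures it. Alternatively one can first split $R$ into local rings as set up above and, in the local case, write $f=r(1+g)$ with $g=-(a-b)t/(1-bt)$ nilpotent; then every $f^m$ lies in the finitely generated module $\sum_i Rg^i$, so any large enough finite subfamily is dependent.
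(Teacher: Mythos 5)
Your proof is correct, but it takes a genuinely different route from the paper's. The paper first decomposes $R$ into a product of local rings and argues locally in both directions: if $a-b$ is a non-unit it lies in the maximal ideal, hence is nilpotent, and the identity $f=r\bigl(1-\frac{(a-b)t}{1-bt}\bigr)$ yields the \emph{explicit} relation $(a-b)^{k-1}f=r(a-b)^{k-1}$ where $(a-b)^k=0\neq(a-b)^{k-1}$; if $a-b$ is a unit, then in a local ring one of $a,b$ must be a unit (say $a$), and after clearing denominators the paper evaluates the resulting polynomial identity at $t=a^{-1}$ to force the lowest coefficient to vanish. Your argument shares only the clearing-denominators step, which you push further, reducing the whole statement to independence of the families $\{u^jv^{2N-j}\}$ in the free module of polynomials of degree at most $2N$; you then settle the unit case by homogenization plus the observation that the linear substitution $(s,t)\mapsto(\tilde u,\tilde v)$ has determinant $a-b\in R^\times$ and hence is an automorphism of $R[s,t]$, and the non-unit case by the $N=1$ determinant $\pm(a-b)^3$ combined with the fact that an injective endomorphism of the finite module $R^3$ is bijective and so has unit determinant (I checked the determinant: it is $-(a-b)^3$, as you claimed up to sign). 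What your route buys: it needs no local decomposition and no case analysis such as ``WLOG $a\in R^\times$'' --- which is indeed false globally, cf.\ the paper's $\Z/6\Z$ example with $a=3$, $b=2$, $a-b=1$, where neither $a$ nor $b$ is a unit --- and your independence half is valid over an arbitrary commutative ring with $a-b\in R^\times$, finiteness entering only in the dependence half. What the paper's route buys: its nilpotency argument is constructive, exhibiting a short annihilating relation already between $f^0$ and $f^1$, namely $s(f-r)=0$ with $s=(a-b)^{k-1}$, whereas your determinant argument only asserts existence of a relation among $f^{-1},f^0,f^1$ (though the adjugate matrix would make it effective). Your closing alternative --- locally writing $f=r(1+g)$ with $g=-(a-b)t/(1-bt)$ nilpotent, so that all $f^m$ lie in the span of $1,g,\dots,g^{k-1}$ and any sufficiently large subfamily is dependent by a counting argument over the finite ring --- is essentially a soft, non-explicit version of the paper's dependence step, and is also correct.
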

\begin{proof}
Write $R=R_1\times \cdots \times R_n$ where each $R_i$ is local. For $s\in R$, let $s_i$ be the image of $s$ in $R_i$ for $1\leq i \leq n$. Note that $a-b\in R^\times$ if and only if $a_i-b_i\in R_i^\times$ for all $i$ and so we may assume without loss of generality that $R$ is local with maximal ideal $\mathfrak{m}$ by the observation preceding the proposition.

Suppose first that $a-b\notin R^\times$ and so $a-b\in \mathfrak{m}$. Since $\mathfrak{m}$ consists of nilpotent elements, we can find $k\geq 1$ with $(a-b)^k=0$ and $(a-b)^{k-1}\neq 0$. Since \[f(t)=r\left(\frac{1-at}{1-bt}\right)=r\left(1-\frac{(a-b)t}{1-bt}\right)\] we get that $(a-b)^{k-1}f=r(a-b)^{k-1}$. Therefore, $(a-b)^{k-1}f-r(a-b)^{k-1}f^0=0$ and $(a-b)^{k-1},r(a-b)^{k-1}\neq 0$.  Thus the powers of $f$ are linearly dependent over $R$.

Assume now that $a-b\in R^\times$. Then either $a$ or $b$ is not in $\mathfrak{m}$. Without loss of generality, assume $a\in R^\times$. Since $f$ is a unit, if $cf^n=0$ with $c\in R$, then $c=0$.  Suppose that $c_1f^{n_1}+\cdots+c_kf^{n_k}=0$ with $n_1< n_2 < \cdots < n_k$, $c_i\neq 0$ for all $i$, and $k>1$. Multiplying the expression by $f^{-n_1}$ if necessary, we may assume $n_1=0$. Now multiply by $(1-bt)^{n_k}$ to get
\[c_1(1-bt)^{n_k}+c_2r^{n_2}(1-bt)^{n_k-n_2}(1-at)^{n_2}+\cdots +c_kr^{n_k}(1-at)^{n_k}=0.\]
Putting $t=\frac{1}{a}$, we get that $c_1(1-\frac{b}{a})^{n_k}=0$ and hence $c_1(\frac{a-b}{a})^{n_k}=0$. Since $a-b$ and $a$ are both units we get that $c_1=0$, a contradiction. Therefore, $\{f^m : m\in \Z\}$ is linearly independent over $R$.
\end{proof}

\begin{proposition}
\label{prop:linind}
Let \[f(t)=r\left(\frac{1-at}{1-bt}\right)\] where $r\in R^\times$ and $a,b\in R$. Then the set $B=\{(-ar+bf)f^m : m\in \Z\}$ is linearly independent over $R$ if and only if $a-b\in R^\times$.  Moreover, if $a-b\notin R^\times$, then there exists $s\in R\setminus \{0\}$ with $sB=0$.
\end{proposition}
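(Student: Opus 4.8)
The plan is to reduce both directions to Proposition~\ref{prop:fmlinind} by recognizing that every element of $B$ is a power series multiple of the single series $u := -ar + bf$, and by pinning down exactly when $u$ is a unit of $R\llbracket t\rrbracket$. First I would note that $u = bf - ra = \sigma(f)$ by Lemma~\ref{lem:shift}, and compute directly
\[
u = bf - ra = \frac{br(1-at) - ra(1-bt)}{1-bt} = \frac{r(b-a)}{1-bt},
\]
using $abr = rab$ to cancel the degree-one terms of the numerator. Thus $u$ has constant term $r(b-a)$ and $B = \{u f^m : m\in\Z\}$.

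For the forward direction, suppose $a-b\in R^\times$. Then $b-a\in R^\times$ (as $-1$ is a unit) and hence $r(b-a)\in R^\times$, so $u$ has a unit constant term and is therefore a unit of $R\llbracket t\rrbracket$. Since multiplication by a unit of $R\llbracket t\rrbracket$ is injective, any finite relation $\sum_m c_m u f^m = u\bigl(\sum_m c_m f^m\bigr) = 0$ forces $\sum_m c_m f^m = 0$, whence all $c_m = 0$ by Proposition~\ref{prop:fmlinind}. Thus $B$ is linearly independent.

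For the converse together with the ``moreover'' clause, suppose $a-b\notin R^\times$. Here I would invoke the fact that in a finite commutative ring every non-unit is a zero divisor: the map $x\mapsto (a-b)x$ is not surjective, hence not injective, so there is some $s\in R\setminus\{0\}$ with $s(a-b)=0$. Then $s\cdot r(b-a) = 0$, so $su = sr(b-a)/(1-bt) = 0$, and consequently $s\cdot uf^m = (su)f^m = 0$ for every $m\in\Z$; that is, $sB = 0$. In particular the relation $s\cdot uf^0 = 0$ with $s\neq 0$ is a nontrivial dependence, so $B$ is linearly dependent, completing the ``only if'' direction as well.

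The only genuinely non-routine inputs are the simplification of $u$ to $r(b-a)/(1-bt)$ — which is precisely what makes $u$ a unit of $R\llbracket t\rrbracket$ exactly when $a-b$ is a unit of $R$, and hence bridges to Proposition~\ref{prop:fmlinind} — and the finite-ring observation that a non-unit $a-b$ must annihilate some nonzero $s$; everything else is bookkeeping with the already-established independence of $\{f^m\}$. I expect the mild subtlety to be keeping track that $sB=0$ is the single uniform statement $su=0$ (because each $f^m$ is a unit), rather than a separate relation for each $m$.
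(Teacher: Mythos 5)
Your proof is correct and takes essentially the same route as the paper's: the same simplification $-ar+bf=-r(a-b)\left(\frac{1}{1-bt}\right)$, the same reduction to Proposition~\ref{prop:fmlinind} via multiplication by the resulting unit of $R\llbracket t\rrbracket$ when $a-b\in R^\times$, and the same finite-ring argument producing $s\in R\setminus\{0\}$ with $s(a-b)=0$ (hence $sB=0$) when $a-b\notin R^\times$. The only difference is cosmetic: you spell out the pigeonhole justification that a non-unit of a finite ring annihilates a nonzero element, which the paper asserts with ``since $R$ is finite.''
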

\begin{proof}
First note that
\[-ar+bf = \frac{-ar(1-bt)+ br(1-at)}{1-bt} = \frac{-r(a-b)}{1-bt}.\]  Thus
\begin{equation}\label{equation:realbasis}
B=\left\{-r(a-b)\left(\frac{1}{1-bt}\right)f^m:m\in \Z\right\}.
\end{equation}
  If $a-b$ is not a unit of $R$, then since $R$ is a finite, there exists $s\in R\setminus \{0\}$ with $s(a-b)=0$.  It follows that $sB=0$ and hence $B$ is not linearly independent over $R$.  Conversely, if $a-b\in R^\times$, then $-r(a-b)\left(\frac{1}{1-bt}\right)$ is a unit of $R\llbracket t\rrbracket$.  The linear independence of $B$ over $R$ is then immediate from the linear independence of $\{f^m: m\in \Z\}$ from Proposition~\ref{prop:fmlinind}.
%
%
%
%
\end{proof}


We remark that if $f(t)=1-2t$ in $(\mathbb Z/4\mathbb Z)\llbracket t\rrbracket$, then $f^2=1$ and so $\mathbb G(\atm_f)$ is in fact a finite group.  In this case $a=2$, $b=0$ and $a-b$ is not a unit.

\begin{theorem}\label{thm:lamplighter}
Let \[f(t)=r\left(\frac{1-at}{1-bt}\right)\] where $r\in R^\times$ and $a,b\in R$. If $a-b\in R^\times$, then $\mathbb{G}(\atm_f)=\langle\alpha_{-sra}\mu_f\alpha_{sb}: s\in R \rangle \cong R^+\wr \Z$.
\end{theorem}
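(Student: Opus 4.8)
The plan is to exhibit $\mathbb{G}(\atm_f)$ as an internal semidirect product $N\rtimes\langle\mu_f\rangle$, in which $N$ is a free $R$-module realized inside the abelian group $\{\alpha_h:h\in R\llbracket t\rrbracket\}$ and $\mu_f$ acts by the shift, and then to recognize this as the restricted wreath product $R^+\wr\Z$.

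First I would rewrite the generators in a more convenient form. Setting $g_s=\alpha_{-sra}\mu_f\alpha_{sb}$, note that $g_0=\mu_f$, and use Proposition~\ref{prop:conj} to push $\mu_f$ past $\alpha_{sb}$ (a constant series), obtaining
\[ g_s\mu_f^{-1}=\alpha_{-sra}\,\mu_f\alpha_{sb}\mu_f^{-1}=\alpha_{-sra}\alpha_{sbf}=\alpha_{s(-ar+bf)}. \]
Thus each generator factors as $g_s=\alpha_{s(-ar+bf)}\mu_f$, so $\mathbb{G}(\atm_f)$ is generated by $\mu_f$ together with the translations $\alpha_{s(-ar+bf)}$ for $s\in R$. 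Conjugating by powers of $\mu_f$ and again invoking Proposition~\ref{prop:conj} gives $\mu_f^m\alpha_{s(-ar+bf)}\mu_f^{-m}=\alpha_{s(-ar+bf)f^m}$, which are exactly the translations $\alpha_h$ with $h$ ranging over the $R$-span of the set $B=\{(-ar+bf)f^m:m\in\Z\}$ of Proposition~\ref{prop:linind}.

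Next I would define $N=\langle\alpha_{s(-ar+bf)f^m}:s\in R,\ m\in\Z\rangle$. Since $\alpha_h\alpha_{h'}=\alpha_{h+h'}$, this $N$ is abelian and isomorphic to the additive subgroup of $R\llbracket t\rrbracket$ spanned by $B$. Here the hypothesis $a-b\in R^\times$ enters decisively: by Proposition~\ref{prop:linind} the set $B$ is then linearly independent over $R$, so this subgroup is the \emph{direct} sum $\bigoplus_{m\in\Z}R\cdot(-ar+bf)f^m\cong\bigoplus_{\Z}R^+$, the intended base group of the wreath product. The conjugation identity above shows $\mu_f$ normalizes $N$ and implements precisely the index shift $m\mapsto m+1$ on the summands, which is exactly the wreath-product action of the $\Z$ factor.

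It then remains to verify that the decomposition $\mathbb{G}(\atm_f)=N\rtimes\langle\mu_f\rangle$ is genuine. Generation follows from the factorization $g_s=\alpha_{s(-ar+bf)}\mu_f$, and normality of $N$ from the shift identity. That $\mu_f$ has infinite order follows from Proposition~\ref{prop:fmlinind}, since $f^k=1$ for some $k\neq 0$ would give the nontrivial dependence $f^k-f^0=0$ among the powers of $f$. Finally $N\cap\langle\mu_f\rangle=\{1\}$: an equality $\mu_f^k=\alpha_h$ of maps on $R\llbracket t\rrbracket$ forces $(f^k-1)g=h$ for all $g$; evaluating at $g=0$ yields $h=0$, and then at $g=1$ yields $f^k=1$, whence $k=0$. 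Assembling these facts identifies $\mathbb{G}(\atm_f)$ with $\left(\bigoplus_{\Z}R^+\right)\rtimes\Z=R^+\wr\Z$. The only substantive obstacle is the linear independence of $B$, which is what guarantees that the base group is a genuine restricted direct sum with no collapsing relations among the $f^m$-translates; this is precisely where $a-b\in R^\times$ is needed and is exactly the content supplied by Proposition~\ref{prop:linind}. Everything else is bookkeeping inside the abelian group of additive translations.
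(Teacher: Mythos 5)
Your proof is correct and takes essentially the same route as the paper's: you factor the generators as $\alpha_{s(-ar+bf)}\mu_f$ via Proposition~\ref{prop:conj}, use the linear independence of $B$ from Proposition~\ref{prop:linind} to identify $N=\langle\alpha_{s(-ar+bf)f^m}\rangle$ with $\bigoplus_{\Z}R^+$, and split off $\langle\mu_f\rangle$ acting by the shift, exactly as in the paper. Your explicit check that $N\cap\langle\mu_f\rangle$ is trivial (evaluating $\mu_f^k=\alpha_h$ at $g=0$ and $g=1$) nicely spells out a detail the paper dispatches with only a parenthetical appeal to the infinite order of $\mu_f$.
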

\begin{proof}
Since $\alpha_{-sra}\mu_f\alpha_{sb}=\alpha_{-sra+sbf}\mu_f=\alpha_{s(-ar+bf)}\mu_f$ by Proposition~\ref{prop:conj}, we can take $\{\alpha_{s(-ar+bf)}, \mu_f: s\in R\}$ as a generating set for $\mathbb{G}(\atm_f)$.

By Proposition \ref{prop:linind}, the set $\{(-ar+bf)f^m : m\in \Z\}$ is a linearly independent set over $R$ and forms an infinite dimensional basis for a free module over $R$. Therefore, by Proposition~\ref{prop:conj},
\[N=\langle \alpha_{s(-ar+bf)f^m} : m \in \Z \rangle =\langle (\mu_f)^m\alpha_{s(-ar+bf)}(\mu_f)^{-m}:m\in \mathbb Z\rangle \cong \bigoplus_{\Z} R^+.\]
Moreover, $N$ is clearly normal in $\mathbb{G}(\atm_f)$ and contains $\alpha_{s(-ar+bf)}$ for all $s\in R$. Furthermore, $N$ intersects $\langle \mu_f \rangle$ trivially (as Proposition~\ref{prop:fmlinind} implies $\mu_f$ has infinite order) and $\mu_f$ acts on $N$ via a shift. We conclude that
\[\mathbb{G}(\atm_f)\cong \bigoplus_{\Z} R^+ \rtimes \Z= R^+\wr \Z\]
as desired.
\end{proof}

\begin{remark}
Note that if $R=\mathbb Z/n\mathbb Z$ and $a,b\in R$ with $a-b$ not a unit, then either $\mu_f$ has finite order (and hence $\mathbb G(\atm_f)$ is finite) or, by Proposition~\ref{prop:linind}, the torsion subgroup of $\mathbb{G}(\atm_f)$, that is, the subgroup $\langle \alpha_{\tilde s(-ar+bf)f^m} : m \in \Z, \tilde s\in R\rangle$ is annihilated by some $0<s<n$ hence is not isomorphic to $\bigoplus_{\mathbb Z} R^+$.   Thus $\mathbb G(\atm_f)$ is not isomorphic to $R^+\wr \mathbb Z$ in this case.
\end{remark}

\begin{lemma}\label{lem:numberofstates}
Let \[f(t)=r\left(\frac{1-at}{1-bt}\right)\] where $r\in R^\times$, $a,b\in R$ and $a-b$ is a unit. Then $\atm_f$ has $|R|$ states.
\end{lemma}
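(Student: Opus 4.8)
The plan is to observe that Proposition~\ref{prop:statesoff} already exhibits the state set of $\atm_f$ as $\{\alpha_{-sra}\mu_f\alpha_{sb} : s\in R\}$, which is parametrized by $s\in R$ and so consists of \emph{at most} $|R|$ elements. To pin the count down exactly, all I need is that distinct parameters give distinct states, that is, that the map $s\mapsto \alpha_{-sra}\mu_f\alpha_{sb}$ is injective; the hypothesis $a-b\in R^\times$ is precisely what makes this hold.

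First I would normalize the form of a state. Using Proposition~\ref{prop:conj} in the form $\mu_f\alpha_{sb}=\alpha_{sbf}\mu_f$, I rewrite
\[\alpha_{-sra}\mu_f\alpha_{sb}=\alpha_{-sra+sbf}\mu_f=\alpha_{s(-ar+bf)}\mu_f.\]
Since $\mu_f$ is invertible, two such states agree if and only if $\alpha_{s(-ar+bf)}=\alpha_{s'(-ar+bf)}$, and because $\alpha_g$ is recovered from $g$ via $\alpha_g(0)=g$ (with $0$ the zero series), this happens if and only if $s(-ar+bf)=s'(-ar+bf)$, equivalently $(s-s')(-ar+bf)=0$ in $R\llbracket t\rrbracket$.

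Next I would show that $(s-s')(-ar+bf)=0$ forces $s=s'$. By the computation opening the proof of Proposition~\ref{prop:linind}, one has $-ar+bf=-r(a-b)/(1-bt)$; in particular its constant term is $-r(a-b)$, which is a unit of $R$ because $r$ and $a-b$ are units. Comparing constant terms in $(s-s')(-ar+bf)=0$ then gives $(s-s')\bigl(-r(a-b)\bigr)=0$, and multiplying by the inverse of $-r(a-b)$ yields $s-s'=0$. (Equivalently, this is just the linear independence of $\{(-ar+bf)f^m : m\in\Z\}$ recorded in Proposition~\ref{prop:linind}, specialized to the single term $m=0$.)

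There is no real obstacle here: the whole argument becomes immediate once the states are put in the normalized form $\alpha_{s(-ar+bf)}\mu_f$. The only point where the hypothesis enters is the passage from $(s-s')(-ar+bf)=0$ to $s=s'$, which is exactly where $a-b\in R^\times$ (and hence the unit constant term $-r(a-b)$) is needed. Injectivity of $s\mapsto \alpha_{-sra}\mu_f\alpha_{sb}$ together with the at-most-$|R|$ bound from Proposition~\ref{prop:statesoff} then gives exactly $|R|$ states, as claimed.
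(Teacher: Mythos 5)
Your proof is correct and follows essentially the same route as the paper: both normalize the states to $\alpha_{s(-ar+bf)}\mu_f$ via Proposition~\ref{prop:conj} and then deduce injectivity of $s\mapsto \alpha_{s(-ar+bf)}\mu_f$ from the fact that $a-b$ is a unit, which the paper does by citing Proposition~\ref{prop:linind}. Your direct constant-term computation with $-r(a-b)$ is just an explicit unpacking of that proposition's $m=0$ case, as you yourself note, so there is no substantive difference.
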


\begin{proof}
Note that, for $s\in S$, we have $\alpha_{-sra}\mu_f\alpha_{sb} = \alpha_{-sra+sbf}\mu_f$ by Proposition~\ref{prop:conj}.  Since $a-b$ is a unit and $\mu_f$ is invertible, we see from Proposition~\ref{prop:linind} that distinct elements $s\in R$ give rise to distinct states of $\atm_f$.
\end{proof}

\begin{theorem}\label{thm:revbirev}
Let \[f(t)=r\left(\frac{1-at}{1-bt}\right)\] where $r\in R^\times$, $a,b\in R$ and $a-b$ is a unit. Then
\begin{enumerate}
\item $\atm_f$ is reversible if and only if $b$ is a unit.
\item $(\atm_f)^{-1}$ is reversible if and only if $a$ is a unit.
\item  $\atm_f$ is bireversible if and only if both  $a$ and $b$ are units.
\end{enumerate}
\end{theorem}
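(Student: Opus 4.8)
The plan is to reduce all three statements to the combinatorial criterion for reversibility recalled in Section~\ref{sec:background}. First note that $\atm_f$ is an invertible automaton: since $f$ has unit constant term $r\in R^\times$, the map $\mu_f$ is an automorphism of $\tree_R$, and by \eqref{eq:output.func} the output function of the state $\alpha_{-sra}\mu_f\alpha_{sb}$ is $\tilde s\mapsto r(\tilde s+(b-a)s)$, a bijection of $R$ because $r$ is a unit. Reversibility is then governed entirely by the transition function: $\atm_f$ is reversible if and only if each $\delta_{\tilde s}\colon Q\to Q$, $q\mapsto \delta(q,\tilde s)$, is a permutation of the state set. So everything comes down to reading \eqref{eq:transition.func} as a family of maps on $Q$.

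For part (i), I would use Lemma~\ref{lem:numberofstates} to identify $Q$ bijectively with $R$ via $s\mapsto \alpha_{-sra}\mu_f\alpha_{sb}$; this is exactly where the hypothesis $a-b\in R^\times$ is used. Under this identification the transition function \eqref{eq:transition.func} becomes $\delta_{\tilde s}(s)=sb+\tilde s$, that is, multiplication by $b$ followed by translation by $\tilde s$. Since translation is always a bijection of $R$, the map $\delta_{\tilde s}$ is a permutation for every $\tilde s$ if and only if $x\mapsto bx$ is a bijection of $R$. The key ring-theoretic fact is that in a finite commutative ring $x\mapsto bx$ is injective---equivalently, since $R$ is finite, bijective---if and only if $b$ is not a zero divisor, which holds if and only if $b\in R^\times$. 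This gives~(i).

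For part (ii), I would identify $(\atm_f)^{-1}$ with $\atm_{f^{-1}}$. Since $\atm_f$ is the minimal automaton of $\mu_f$ by Proposition~\ref{prop:statesoff}, and the minimal automaton of $\mu_f^{-1}=\mu_{f^{-1}}$ is the inverse of that of $\mu_f$ (as noted following Lemma~\ref{lem:states}), we have $(\atm_f)^{-1}=\atm_{f^{-1}}$. Now $f^{-1}=r^{-1}\left(\frac{1-bt}{1-at}\right)$, so applying Proposition~\ref{prop:statesoff} and Lemma~\ref{lem:numberofstates} to $f^{-1}$---legitimate because the relevant difference $b-a=-(a-b)$ is again a unit---the transition function of $\atm_{f^{-1}}$, in the analogous parametrization, becomes $\delta_{\tilde s}(s)=sa+\tilde s$, with the roles of $a$ and $b$ interchanged. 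Exactly as in~(i), this is a permutation for every $\tilde s$ if and only if $a\in R^\times$, proving~(ii). Finally, (iii) is immediate from the definition: $\atm_f$ is bireversible if and only if it is reversible and $(\atm_f)^{-1}$ is reversible, so combining~(i) and~(ii) yields bireversibility if and only if both $a$ and $b$ are units.

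The only real care needed---and the step I would treat as the main, albeit minor, obstacle---is the bookkeeping in part~(ii): correctly recognizing $(\atm_f)^{-1}$ as the minimal automaton $\atm_{f^{-1}}$, verifying that the hypothesis of Lemma~\ref{lem:numberofstates} survives passage to $f^{-1}$, and tracking the swap $a\leftrightarrow b$ (together with $r\mapsto r^{-1}$) so that the transition map now selects $a$ rather than $b$. Everything else rests on the single ring-theoretic observation that multiplication by a ring element is bijective exactly when that element is a unit.
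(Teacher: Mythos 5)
Your proof is correct and takes essentially the same approach as the paper: the paper likewise reduces reversibility to whether each $\delta_{\tilde s}$ is injective on states, establishes the injectivity of the parametrization $s\mapsto \alpha_{-sra}\mu_f\alpha_{sb}$ via Propositions~\ref{prop:conj} and~\ref{prop:linind} (which is exactly the content of Lemma~\ref{lem:numberofstates} that you invoke), uses a zero divisor $s$ with $sb=0$ for the converse, and handles (ii) through the same identification $(\atm_f)^{-1}=\atm_{f^{-1}}$ with the roles of $a$ and $b$ swapped. Your packaging---identifying $Q$ with $R$ so that $\delta_{\tilde s}(s)=sb+\tilde s$ and observing that this affine map is a permutation exactly when $b$ is a unit---is precisely the identification the paper records in \eqref{eq:transitions} immediately after its proof.
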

\begin{proof}
Inspection of \eqref{eq:transition.func} shows that $\atm_f$ is reversible if and only if, for each $\tilde s\in R$, the mapping
\[\alpha_{-sra}\mu_f\alpha_{sb}\longmapsto \alpha_{-(sb+\tilde{s})ra}\mu_f\alpha_{(sb+\tilde{s})b}\]
is one-to-one as this is the output function $\delta_{\tilde s}$ for the dual automaton $\partial \atm_f$ at state $\tilde s$.  Suppose first that $b$ is a unit and
\[\alpha_{-(sb+\tilde{s})ra}\mu_f\alpha_{(sb+\tilde{s})b}=\alpha_{-(s'b+\tilde{s})ra}\mu_f\alpha_{(s'b+\tilde{s})b}\]
for some $s,s'\in R$. By Proposition \ref{prop:conj}, this can be rewritten as
\[\alpha_{-(sb+\tilde{s})ra+(sb+\tilde{s})bf}\mu_f=\alpha_{-(s'b+\tilde{s})ra+(s'b+\tilde{r})bf}\mu_f.\]
Since $\mu_f$ is an invertible function this implies that
\[\alpha_{(sb+\tilde{s})(-ra+bf)}=\alpha_{(s'b+\tilde{s})(-ra+bf)}\]
and hence, by Proposition~\ref{prop:linind}, we get that
\[(sb+\tilde{s})b=(s'b+\tilde{s})b.\] Now since $b\in R^\times$, we deduce that $s=s'$. Therefore, $\partial \atm_f$ is invertible and $\atm_f$ is reversible.

Conversely, if $b$ is not a unit, then since $R$ is finite, $sb=0$ for some $s\neq 0$ in $S$.  Taking $\tilde s=0$, we then have $\alpha_{-sra}\mu_f\alpha_{sb}\mapsto \mu_f$ and $\mu_f\mapsto \mu_f$ under $\delta_{0}$.  Thus $\delta_0$ is not a permutation of the state set (as the states $\mu_f$ and $\alpha_{-sra}\mu_f\alpha_{sb}$ are distinct by Lemma~\ref{lem:numberofstates}) and so $\atm_f$ is not reversible.  This proves (i).

Now observe that $(\atm_f)^{-1}=\atm_{f^{-1}}$ and that \[f^{-1}=r^{-1}\left(\frac{1-bt}{1-at}\right).\] Therefore, by the same argument as above $(\atm_f)^{-1}$ is reversible if and only if $a$ is a unit, establishing (ii).  Item (iii) follows directly from (i) and (ii).
\end{proof}

Note that if $a-b$ is a unit, then we can identify the states of $\atm_f$ bijectively with $R$ via $s\mapsto \alpha_{-sra}\mu_f\alpha_{sb}$ by the above proof.  Under this identification the the edges of $\atm_f$ become
\begin{equation}\label{eq:transitions}
s\xrightarrow{\,\, \tilde s\,\mid\, r(\tilde s+(b-a)s)}sb+\tilde s.
\end{equation}

We now prove that when $a-b$ is a unit, the automaton group acts spherically transitively.  This is a necessary condition in order to perform spectral computations using the automaton group representation.

\begin{proposition}
Let \[f(t)=r\left(\frac{1-at}{1-bt}\right)\] where $r\in R^\times$ and $a,b\in R$. If $a-b$ is a unit of $R$, then $\mathbb{G}(\atm_f)$ is spherically transitive on $\tree_R$.  If, in addition, $r=1$ and $a=0$ or $b=0$, then $\mathbb{G}(\atm_f)$ is self-replicating.
\end{proposition}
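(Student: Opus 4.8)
The plan is to deduce both statements from the translation subgroup
\[N=\langle \alpha_{s(-ar+bf)f^m}:s\in R,\ m\in\Z\rangle\cong\textstyle\bigoplus_\Z R^+\]
appearing in the proof of Theorem~\ref{thm:lamplighter}, together with the elementary observation that a translation $\alpha_c$ acts on the $\ell$-th level $R^\ell$ (identified with the first $\ell$ coefficients of a power series) as translation by the truncation $\pi_\ell(c)$ of $c$. Throughout write $G=\mathbb G(\atm_f)$ and $u=-ar+bf=-r(a-b)/(1-bt)$, which is a \emph{unit} of $R\llbracket t\rrbracket$ by the computation in Proposition~\ref{prop:linind}; recall also that $G=\langle N,\mu_f\rangle$ and that the generators are $g_s=\alpha_{su}\mu_f$.

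For \emph{spherical transitivity} I would first reduce to showing that $N$ alone is transitive on every level. Since $\alpha_c$ translates the first $\ell$ coefficients by $\pi_\ell(c)$, the group $N$ is transitive on $R^\ell$ exactly when $\pi_\ell$ maps $\mathrm{span}_R(B)$ onto $R^\ell$, where $B=\{uf^m\}$. Because multiplication by a unit of $R\llbracket t\rrbracket$ induces an $R$-linear automorphism of $R[t]/(t^\ell)$ (it is lower triangular with invertible diagonal in the basis $1,t,\dots,t^{\ell-1}$), this reduces to $\pi_\ell\bigl(\mathrm{span}_R\{f^m:m\ge 0\}\bigr)=R^\ell$, i.e.\ to the claim that the image $\bar f$ generates $R[t]/(t^\ell)$ as an $R$-algebra. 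The key point is that $a-b\in R^\times$ forces $\bar f-r=wt$ with $w$ a unit of $R[t]/(t^\ell)$, so that the powers $(\bar f-r)^k=w^kt^k$ for $0\le k<\ell$ are triangular with unit diagonal and hence form an $R$-basis; this gives $R[\bar f]=R[t]/(t^\ell)$, level transitivity of $N$, and a fortiori of $G$. A pleasant feature is that this argument works over $R$ directly, with no need to decompose $R$ into local rings.

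For \emph{self-replicating}, fix a letter $x\in R$ and consider the homomorphism $\phi_x\colon\Stab_G(x)\to G$, $g\mapsto g|_x$, whose image $H$ I must show is all of $G$. The crucial structural input under the extra hypothesis ($r=1$ and $a=0$ or $b=0$) is that $V:=\mathrm{span}_R(B)=\mathrm{span}_R\{f^m:m\in\Z\}$ is closed under multiplication by $t$, because $t$ is then an $R$-linear combination of finitely many powers of $f$: explicitly $t=a^{-1}(1-f)$ when $b=0$, and $t=b^{-1}(1-f^{-1})$ when $a=0$. Granting this, for every $e\in V$ the element $\alpha_{te}$ lies in $N$, fixes $x$ (its constant term is $0$), and a direct computation shows that the state of a translation $\alpha_c$ at any letter is $\alpha_{\sigma(c)}$, so $\alpha_{te}|_x=\alpha_{\sigma(te)}=\alpha_e$; hence $N\subseteq H$. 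Finally $r=1$ makes $\mu_f$ fix the first level, so $\mu_f\in\Stab_G(x)$ and, by the transition rule~\eqref{eq:transition.func}, $\mu_f|_x=g_x=\alpha_{xu}\mu_f$; since $\alpha_{xu}\in N\subseteq H$ this yields $\mu_f\in H$, whence $H\supseteq\langle N,\mu_f\rangle=G$. Running this for each $x\in R$ establishes self-replicating.

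I expect the main obstacle to be the self-replicating half, specifically the closure of $V$ under multiplication by $t$: this is precisely where the hypotheses $a=0$ or $b=0$ are needed, since for general $a,b$ with $a-b$ a unit the series $t$ is not an $R$-combination of finitely many powers of $f$ (and one should expect self-replicating to fail there). Once that closure is secured, assembling $N\subseteq H$ with the capture of $\mu_f$ afforded by $r=1$ is routine; for spherical transitivity the only genuine content is the triangularity argument in $R[t]/(t^\ell)$.
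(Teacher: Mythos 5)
Your proposal is correct, and in both halves it runs along a genuinely different route from the paper's. For spherical transitivity the paper works with the same translation module acting on $R\llbracket t\rrbracket/(t^{d+1})$, but argues by injectivity plus counting: Proposition~\ref{prop:linind} makes the $|R|^{d+1}$ series $\frac{1}{1-bt}\left(c_0+c_1f+\cdots+c_df^d\right)$ pairwise distinct, multiplication by the unit $(1-bt)^{d+1}$ shows their truncations remain distinct, and finiteness forces them to exhaust all cosets of $(t^{d+1})$. Your triangularity argument --- $\bar f-r=\bar w\,\bar t$ with $\bar w$ a unit of $R[t]/(t^\ell)$, so the family $(\bar f-r)^k=\bar w^k\bar t^k$, $0\le k<\ell$, is triangular with unit diagonal --- proves surjectivity of the truncated span directly, bypassing both the counting step and the appeal to Proposition~\ref{prop:linind}; what it buys is a constructive argument that does not use finiteness of $R$ at this stage, at the cost of one extra (easy) observation. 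The divergence is larger in the self-replicating half: the paper reduces to $b=0$ by replacing $f$ with $f^{-1}$, verifies the condition only at the vertex $0$, and then invokes spherical transitivity plus a standard argument; its mechanism is to extract the \emph{constant} translations $\alpha_c$, $c\in R$, from the states $(\mu_f)|_s=\alpha_{-sa}\mu_f$ and to conjugate, obtaining $(\alpha_{-s}\mu_f\alpha_s)|_0=\alpha_{-as}\mu_f$ via Lemma~\ref{lem:states}. You instead verify the condition at every letter $x\in R$ (which, by the parenthetical remark in the paper's definition, suffices without any appeal to transitivity), and your mechanism is the $t$-stability $tV\subseteq V$ of the translation module, via $t=a^{-1}(1-f)$ when $b=0$ and $t=b^{-1}(1-f^{-1})$ when $a=0$; this yields $N\subseteq H$ through $\alpha_{te}|_x=\alpha_{\sigma(te)}=\alpha_e$, after which $\mu_f|_x=\alpha_{xu}\mu_f$ from \eqref{eq:transition.func} captures $\mu_f$, exactly as you say. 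Your version isolates cleanly where each hypothesis enters ($r=1$ makes $\mu_f$ trivial on level one; $a=0$ or $b=0$ makes $V$ closed under multiplication by $t$), while the paper's conjugation trick is shorter but tied to the special vertex $0$ and to the transitivity already proved. One small point to make explicit in a final write-up: the identification $\mathrm{span}_R(B)=\mathrm{span}_R\{f^m:m\in\Z\}$ used in your second half holds in your two cases because $u=-ar+bf$ equals the unit constant $-a$ when $b=0$ and $r=1$, and equals $bf$ with $b$ a unit when $a=0$; this one-line check is precisely what lets you treat $V$ as the span of the powers of $f$.
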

\begin{proof}
The action of $\mathbb G(\atm_f)$ on level $d+1$ of $\tree_R$ can be identified with its action on $R\llbracket t\rrbracket/(t^{d+1})$ by affine mappings  for $d\geq 0$.  The proof of Theorem~\ref{thm:lamplighter} shows that $\mathbb{G}(\atm_f)$ contains the translations by $s(-ra+bf)f^m$ for all $s\in R$ and $m\in \Z$ or, equivalently by \eqref{equation:realbasis}, the $R$-span of all translations $\frac{1}{1-bt}f^m$ with $m\in \mathbb Z$ (since $r$ and $a-b$ are units).

Proposition~\ref{prop:linind} shows that the $|R|^{d+1}$ power series
\begin{equation}\label{eq:transitivity}
\frac{1}{1-bt}\left(c_0+c_1f+\cdots+c_df^d\right)
\end{equation}
 with $c_0,\ldots, c_d\in R$ are distinct. Multiplying \eqref{eq:transitivity} through by the invertible power series $(1-bt)^{d+1}$ yields that the $|R|^{d+1}$  polynomials
\begin{equation}\label{eq:reducednow}
c_0(1-bt)^d+c_1(1-at)(1-bt)^{d-1}+\cdots+c_d(1-at)^d
\end{equation}
of degree at most $d$  are all distinct.   Since \eqref{eq:reducednow} consists of polynomials of degree at most $d$, we obtain that the $|R|^{d+1}$ cosets in $R\llbracket t \rrbracket / (t^{d+1})$
 \begin{equation*}
c_0(1-bt)^d+c_1(1-at)(1-bt)^{d-1}+\cdots+c_d(1-at)^d+(t^{d+1})
\end{equation*}
are all distinct and hence multiplying by the unit $(1-bt)^{-(d+1)}$, we obtain that the $|R|^{d+1}$ cosets
\begin{equation*}
\frac{1}{1-bt}\left(c_0+c_1f+\cdots+c_df^d\right)+(t^{d+1})
\end{equation*}
are distinct.  But these are then all the cosets in $R\llbracket t\rrbracket/(t^{d+1})$ and so it follows that the action of $\mathbb G(\atm_f)$ on $R\llbracket t\rrbracket/(t^{d+1})$ contains all the translations and hence is transitive.  This completes the proof that $\mathbb G(\atm_f)$ is spherically transitive when $a-b$ is a unit.

Let $G=\mathbb G(\atm_f)$.
Suppose now that $r=1$ and $a$ or $b$ is zero.  Replacing $f$ by its inverse, we may assume that $b=0$.  Note that $a$ must then be a unit.  We already have that $G$ acts spherically transitively and so it suffices to show that $\{g|_0: g\in \Stab_G(0)\}=G$ by a standard argument.  Recall that by Proposition~\ref{prop:statesoff}, for $b=0$ and $r=1$, we have $G=\langle \alpha_{-sa}\mu_f : s\in R \rangle$. First note that $\mu_f\in \Stab_G(0)$ as $r=1$.  Also note that $(\mu_f)|_s=\alpha_{-sa}\mu_f$.   In particular, $\alpha_{-sa}\in G$ for all $s\in S$.  Since $a$ is a unit, for any $c\in R$, we can find $s\in R$ with $-sa=c$. Thus $\alpha_c\in G$ for all $c\in R$.  Then we have $\alpha_{-s}\mu_f\alpha_s\in \Stab_G(0)$ and $(\alpha_{-s}\mu_f\alpha_s)|_0 = (\mu_f)|_s=\alpha_{-as}\mu_f$ by Lemma~\ref{lem:states} for all $s\in R$. Thus $\{g|_0: g\in \Stab_G(0)\}=G$.  This completes the proof that $G$ is self-replicating.
\end{proof}

\section{Rings with $a$, $b$, and $a-b$ in $R^\times$}\label{sec:biringsclassification}
In this section, we characterize which finite abelian groups $A$ can be the additive group of a finite commutative ring with two units whose difference is a unit so that we may realize $A\wr \mathbb Z$ as a bireversible automaton group via Theorem~\ref{thm:lamplighter} and Theorem~\ref{thm:revbirev}.

We will write $(R, \mathfrak{m})$ to denote a finite commutative local ring with maximal ideal $\mathfrak{m}$. We remind the reader now of the following well-known proposition, whose proof we include for completeness.

\begin{proposition}\label{prop:localorder}
If $(R, \mathfrak{m})$ is a finite commutative local ring and $|R/\mathfrak{m}|=q$, then $|R|=q^i$ for some $i\geq 1$. In particular, $R$ has  prime power order.
\end{proposition}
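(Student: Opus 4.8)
The plan is to filter $R$ by the powers of its maximal ideal and to observe that each successive quotient is a vector space over the residue field $R/\mathfrak{m}$, which is a field of order $q$. First I would record that, since $R$ is finite, the descending chain of ideals $\mathfrak{m}\supseteq \mathfrak{m}^2\supseteq \mathfrak{m}^3\supseteq \cdots$ consists of finite sets and hence must stabilize, say $\mathfrak{m}^n=\mathfrak{m}^{n+1}$ for some $n\geq 1$. Because $R$ is local and $\mathfrak{m}^n$ is finitely generated, Nakayama's lemma applied to the module $M=\mathfrak{m}^n$ (for which $\mathfrak{m}M=\mathfrak{m}^{n+1}=\mathfrak{m}^n=M$) forces $\mathfrak{m}^n=0$. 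This yields a finite filtration
\[R=\mathfrak{m}^0\supseteq \mathfrak{m}^1\supseteq \cdots \supseteq \mathfrak{m}^{n-1}\supseteq \mathfrak{m}^n=0.\]

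The key step is then to analyze each factor $\mathfrak{m}^j/\mathfrak{m}^{j+1}$ for $0\leq j\leq n-1$. Since $\mathfrak{m}$ annihilates this quotient (as $\mathfrak{m}\cdot \mathfrak{m}^j=\mathfrak{m}^{j+1}$), it is naturally a module over the field $k=R/\mathfrak{m}$, that is, a $k$-vector space. Being finite, it has some finite dimension $d_j$ over $k$, and therefore $|\mathfrak{m}^j/\mathfrak{m}^{j+1}|=q^{d_j}$ because $|k|=q$. Note in particular that $\mathfrak{m}^0/\mathfrak{m}^1=R/\mathfrak{m}$ has order $q$, so $d_0=1$.

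Finally, I would count $|R|$ multiplicatively along the filtration:
\[|R|=\prod_{j=0}^{n-1}|\mathfrak{m}^j/\mathfrak{m}^{j+1}|=\prod_{j=0}^{n-1}q^{d_j}=q^{\,\sum_{j=0}^{n-1}d_j},\]
so that $|R|=q^{i}$ with $i=\sum_{j=0}^{n-1}d_j\geq 1$, the inequality holding because $d_0=1$. For the last assertion, since $k=R/\mathfrak{m}$ is a finite field its order $q$ is a power of some prime $p$, whence $|R|=q^{i}$ is likewise a power of $p$ and $R$ has prime power order.

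I do not anticipate a serious obstacle; the only mildly delicate point is justifying the nilpotency of $\mathfrak{m}$ cleanly, which is what makes the filtration terminate. This is handled either by the stabilization-plus-Nakayama argument above, or directly from the fact (already recalled before Proposition~\ref{prop:fmlinind}) that in an Artinian local ring $\mathfrak{m}$ consists of nilpotent elements, combined with $\mathfrak{m}$ being finitely generated so that a common nilpotency exponent exists.
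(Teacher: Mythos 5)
Your proof is correct and follows essentially the same route as the paper: filter $R$ by the powers of $\mathfrak{m}$, observe that each quotient $\mathfrak{m}^j/\mathfrak{m}^{j+1}$ is a vector space over the residue field of order $q$, and count $|R|$ multiplicatively along the filtration. The only cosmetic difference is that you justify the nilpotency of $\mathfrak{m}$ by chain stabilization plus Nakayama, whereas the paper cites the nilpotency of the Jacobson radical of an Artinian ring; both are standard and equally valid.
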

\begin{proof}
Since $R$ is finite, hence Artinian, and $\mathfrak{m}$ is the Jacobson radical of $R$, it follows that $\mathfrak m$ is nilpotent and hence $\mathfrak m^{k+1}=0$ for some $k\geq 0$, which we take to be minimal.  Thus we have a filtration $R\supsetneq \mathfrak m\supsetneq \cdots \supsetneq \mathfrak m^{k+1}=0$ and $\mathfrak m^j/\mathfrak m^{j+1}$ is a finite dimensional vector space over the residue field $R/\mathfrak m$ for $0\leq j\leq k$.  Thus $|\mathfrak m^j/\mathfrak m^{j+1}|=q^{d_j}$ with $d_j=\dim \mathfrak m^j/\mathfrak m^{j+1}$ and hence $|R|=q^{d_1+\cdots +d_k}$ by repeated application of Lagrange's theorem.
\end{proof}

\begin{proposition}\label{prop:dividesa1}
Let $(R, \mathfrak{m})$ be a finite commutative local ring such that $\mathrm{char}(R/\mathfrak{m})=p$ and suppose $|R/\mathfrak{m}|=p^r$. If \[R^+\cong (\Z/p\Z)^{a_1} \oplus (\Z/{p^2}\Z)^{a_2} \oplus \cdots \oplus (\Z/{p^t}\Z)^{a_t},\] then $r| a_1$.
\end{proposition}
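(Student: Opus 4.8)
The plan is to isolate the integer $a_1$ from the abelian group $R^+$ as an $\mathbb F_p$-dimension of a subquotient that secretly carries module structure over a ring containing the residue field. Write $R[p]=\{x\in R: px=0\}$ for the $p$-torsion ideal. Using the elementary structure theory of finite abelian $p$-groups, one checks that for $G=\bigoplus_{i=1}^{t}(\Z/p^i\Z)^{a_i}$ the $p$-torsion $G[p]$ has $\mathbb F_p$-dimension $\sum_i a_i$, while $G[p]\cap pG=(pG)[p]$ has $\mathbb F_p$-dimension $\sum_{i\geq 2}a_i$ (since $pG\cong\bigoplus_{i\geq 2}(\Z/p^{i-1}\Z)^{a_i}$). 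Subtracting yields the key formula
\[a_1=\dim_{\mathbb F_p}R[p]-\dim_{\mathbb F_p}\bigl(R[p]\cap pR\bigr),\]
so it suffices to prove that each of the two $\mathbb F_p$-dimensions on the right is divisible by $r$.

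The next step is to notice that both of these spaces are modules over $\bar R:=R/pR$. Indeed, $R[p]$ is an ideal of $R$ that is annihilated by $pR$ (for $pr'\in pR$ and $x\in R[p]$ one has $(pr')x=r'(px)=0$), so it is naturally a $\bar R$-module, and $R[p]\cap pR$, being an intersection of ideals lying inside $R[p]$, is a $\bar R$-submodule. Now $\bar R$ is again a finite commutative local ring: because $p\in\mathfrak m$ (as $\mathrm{char}(R/\mathfrak m)=p$) we have $pR\subseteq\mathfrak m$, the maximal ideal of $\bar R$ is $\mathfrak m/pR$, and the residue field is still $k:=R/\mathfrak m$, of order $q:=p^r$. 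Crucially, $\bar R$ now has characteristic $p$.

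The heart of the argument is that a finite commutative local ring of characteristic $p$ with residue field $k$ contains a coefficient field isomorphic to $k$. This is the equal-characteristic case of Cohen's structure theorem; concretely, one lifts a generator $\bar\zeta$ of $k^\times$ via Hensel's lemma applied to $X^{q-1}-1$, whose reduction over $k$ has $\bar\zeta$ as a simple root because $q-1\equiv -1$ is invertible modulo $p$, to obtain $\zeta\in\bar R$ with $\zeta^{q-1}=1$ reducing to $\bar\zeta$; then $\mathbb F_p[\zeta]\subseteq\bar R$ is a subfield mapping isomorphically onto $k$. Consequently every $\bar R$-module is, by restriction of scalars along $k\hookrightarrow\bar R$, a $k$-vector space.

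Therefore both $R[p]$ and $R[p]\cap pR$ are $k$-vector spaces, so their $\mathbb F_p$-dimensions equal $r$ times their $k$-dimensions and are in particular divisible by $r$; by the displayed formula, $r\mid a_1$. I expect the main obstacle to be precisely the structural input that $\bar R$ contains a copy of its residue field: once this coefficient field is available, the divisibility is forced purely by the tower $\mathbb F_p\subseteq k\subseteq\bar R$, and the surrounding steps (the abelian-group computation of $a_1$ and the verification of the $\bar R$-module structures) are routine.
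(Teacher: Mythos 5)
Your proof is correct, but it takes a genuinely different route from the paper's. The paper stays at the level of counting orders: it first reduces modulo $p^2R$ so that $R^+\cong(\Z/p\Z)^{a_1}\oplus(\Z/p^2\Z)^{k}$ with $k=a_2+\cdots+a_t$, then applies Proposition~\ref{prop:localorder} twice --- once to $R$ itself, giving $r\mid a_1+2k$, and once to $R/I$ with $I=\{s\in R: ps=0\}$ (noting $I\subseteq\mathfrak m$ and that $R/I$ is again local with the same residue field), giving $r\mid a_1+k$ --- and concludes by subtraction, $r\mid 2(a_1+k)-(a_1+2k)=a_1$. There the only subquotients that must carry $k$-vector space structure are the $\mathfrak m^j/\mathfrak m^{j+1}$ inside the proof of Proposition~\ref{prop:localorder}, and those are $k$-spaces for free. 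You instead isolate $a_1=\dim_{\mathbb F_p}R[p]-\dim_{\mathbb F_p}\bigl(R[p]\cap pR\bigr)$ and put a $k$-structure on the $p$-torsion ideal itself; since $R[p]$ is a priori only a module over $\bar R=R/pR$, this genuinely requires the equal-characteristic coefficient field, i.e.\ Cohen's theorem or Henselian lifting of a primitive $(q-1)$st root of unity --- which is valid here, as the maximal ideal of a finite local ring is nilpotent, so Hensel's lemma applies. (One small point to spell out: why $\mathbb F_p[\zeta]$ is a field. Either lift $\bar\zeta$ as a root of its irreducible minimal polynomial $g\mid X^{q-1}-1$ over $\mathbb F_p$, so that $\mathbb F_p[\zeta]$ is a nonzero quotient of the field $\mathbb F_p[X]/(g)$, or observe that $\mathbb F_p[\zeta]$ is a quotient of the separable algebra $\mathbb F_p[X]/(X^{q-1}-1)$, hence a product of fields, and a local ring has no nontrivial idempotents.) What your route buys: it is structurally sharper, exhibiting $a_1/r$ as the $k$-dimension of a canonical subquotient, and it scales --- since each $p^{i-1}R\cap R[p]$ is likewise an ideal killed by $pR$, the same argument gives that $a_i=\dim_{\mathbb F_p}\bigl(p^{i-1}R\cap R[p]\bigr)-\dim_{\mathbb F_p}\bigl(p^{i}R\cap R[p]\bigr)$ is divisible by $r$ for every $i$, recovering Theorem~\ref{thm:rdividesai} in one stroke, without the induction on quotients by torsion ideals that the paper uses. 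What the paper's route buys: it is entirely elementary and self-contained, needing no Hensel/Cohen input beyond Proposition~\ref{prop:localorder}.
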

\begin{proof}
Consider the ideal $p^2R\subseteq \mathfrak{m}$. Then $(R/p^2R, \mathfrak{m}/p^2R)$ is a finite local commutative ring with residue field $R/\mathfrak{m}$ and \[(R/p^2R)^+\cong (\Z/p\Z)^{a_1} \oplus (\Z/{p^2}\Z)^{k}\] where $k=a_2+\cdots + a_t$. Note that it is possible that $k=0$. Without loss of generality, we may assume that $p^2R=0$ and $R^+\cong (\Z/p\Z)^{a_1} \oplus (\Z/{p^2}\Z)^{k}$. Then $|R|=p^{a_1+2k}=(p^r)^n$ for some $n$ by Proposition~\ref{prop:localorder}. Thus $r| a_1+2k$.

Let $I$ be the ideal $I=\{s\in R : ps=0\}$; it is proper since $R$ has characteristic $p^2$. Then $I^+\cong (\Z/p\Z)^{a_1}\oplus (\Z/p\Z)^k$ and $|I|=p^{a_1+k}$. Moreover, $I\subseteq \mathfrak{m}$ and $(R/I, \mathfrak{m}/I)$ is again a finite commutative local ring with residue field $R/\mathfrak{m}$. So $|R/I|=(p^r)^m$ for some $m$ by Proposition~\ref{prop:localorder}. Thus \[|I|=\frac{|R|}{|R/I|}=\frac{p^{rn}}{p^{rm}}=p^{r(n-m)}=p^{a_1+k}\] and so $r|a_1+k$.
We conclude that  $r| 2(a_1+k)-(a_1+2k)=a_1$.
\end{proof}

The following theorem is presumably well known, but we could not find a reference.

\begin{theorem}\label{thm:rdividesai}
Let $(R, \mathfrak{m})$ be a finite commutative local ring with $R/\mathfrak{m}$ of characteristic $p$ and $|R/\mathfrak{m}|=p^r$. Let \[R^+\cong (\Z/p\Z)^{a_1}\oplus (\Z/{p^2}\Z)^{a_2}\oplus \cdots \oplus (\Z/{p^t}\Z)^{a_t}.\] Then $r|a_i$ for $1\leq i \leq s$.
\end{theorem}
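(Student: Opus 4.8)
The plan is to avoid trying to put an $R/\mathfrak{m}$-vector space structure on the layers $p^jR/p^{j+1}R$ (which would immediately force $r$ to divide their $\mathbb{F}_p$-dimensions), because $\mathfrak{m}$ need not annihilate these quotients. Instead I would work entirely with the \emph{orders} of the ideals $p^jR$ and extract divisibility by $r$ from Proposition~\ref{prop:localorder} alone.

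First I would record the key observation that every ideal $I\subseteq R$ has $|I|$ a power of $p^r$. Indeed, for $I$ proper, $R/I$ is again a finite commutative local ring with maximal ideal $\mathfrak{m}/I$ and residue field isomorphic to $R/\mathfrak{m}$, so Proposition~\ref{prop:localorder} gives both $|R|=(p^r)^n$ and $|R/I|=(p^r)^m$; hence $|I|=|R|/|R/I|=(p^r)^{n-m}$. In other words, $r$ divides the $p$-adic valuation of $|I|$ for every ideal $I$ (the cases $I=R$ and $I=0$ being trivial).

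Next I would apply this to the chain of ideals $p^jR$, $j\ge 0$. Writing $R^+\cong\bigoplus_i(\Z/p^i\Z)^{a_i}$ and computing the image under multiplication by $p^j$, one gets $p^jR\cong\bigoplus_{i>j}(\Z/p^{i-j}\Z)^{a_i}$ as abelian groups, so the $p$-adic valuation of $|p^jR|$ equals $c_j:=\sum_{i>j}(i-j)a_i$. By the previous paragraph $r\mid c_j$ for every $j\ge 0$. A short telescoping computation gives $c_j-c_{j+1}=a_{j+1}+a_{j+2}+\cdots+a_t$, which is therefore divisible by $r$ for each $j$; equivalently, each layer $p^jR/p^{j+1}R$ has order a power of $p^r$. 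Finally, subtracting the relation for $j=i$ from that for $j=i-1$ isolates $a_i=(a_i+\cdots+a_t)-(a_{i+1}+\cdots+a_t)$, yielding $r\mid a_i$ for all $1\le i\le t$. (The case $i=1$ recovers Proposition~\ref{prop:dividesa1}.)

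The main obstacle is conceptual rather than computational: the tempting route through the graded pieces $p^jR/p^{j+1}R$ breaks down because these are a priori only $\mathbb{F}_p$-vector spaces, not $R/\mathfrak{m}$-modules. The device that saves the argument is to pass from dimensions of quotients to orders of ideals, where Proposition~\ref{prop:localorder} applies uniformly to $R$ and to every quotient $R/p^jR$. Once the problem is rephrased this way, everything reduces to the counting identity for $c_j$ and a telescoping difference, both of which are routine.
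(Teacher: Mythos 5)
Your proof is correct, and it takes a genuinely different route from the paper's. The paper argues by induction on the exponent: it first establishes the base case $r\mid a_1$ separately (Proposition~\ref{prop:dividesa1}, which reduces modulo $p^2R$ and plays $|R|$ against the order of the annihilator ideal $\{s\in R: ps=0\}$), and then, assuming $r\mid a_1,\ldots,a_k$, passes to $R/I$ with $I=\{s\in R: p^k s=0\}$ --- again a finite local ring with the same residue field, whose additive group is $(\Z/p\Z)^{a_{k+1}}\oplus\cdots\oplus(\Z/p^{t-k}\Z)^{a_t}$ --- and invokes Proposition~\ref{prop:dividesa1} once more. You instead work with the descending chain of ideals $p^jR$ (in effect the chain dual to the paper's annihilator chain, since $p^jR\cong R/\{s: p^js=0\}$) and bypass Proposition~\ref{prop:dividesa1} entirely: your single lemma, that every ideal of $R$ has order a power of $p^r$ because $R/I$ is local with residue field $R/\mathfrak{m}$ and Proposition~\ref{prop:localorder} applies to both $R$ and $R/I$, is precisely the counting device the paper uses \emph{inside} the proof of Proposition~\ref{prop:dividesa1}, but you apply it uniformly along the whole chain, and the two layers of telescoping on $c_j=\sum_{i>j}(i-j)a_i$ isolate each $a_i$ directly; indeed your $i=1$ case recovers Proposition~\ref{prop:dividesa1} as a corollary rather than needing it as a prerequisite. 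All the steps check out: $p^jR\cong\bigoplus_{i>j}(\Z/p^{i-j}\Z)^{a_i}$ is valid since multiplication by the ring element $p^j\cdot 1_R$ agrees with the $p^j$-fold sum on $R^+$, every proper ideal lies in $\mathfrak{m}$ by locality, and the edge cases $p^0R=R$ and $p^jR=0$ are covered by Proposition~\ref{prop:localorder} directly. Your approach buys a shorter, essentially induction-free argument from one lemma; the paper's buys a reusable intermediate statement at the cost of two separate annihilator constructions. Your opening diagnosis is also apt: the layers $p^jR/p^{j+1}R$ are in general only $\mathbb{F}_p$-spaces, not $R/\mathfrak{m}$-spaces (unlike the layers $\mathfrak{m}^j/\mathfrak{m}^{j+1}$ used in Proposition~\ref{prop:localorder}), so trading dimensions of graded pieces for orders of ideals is exactly the right fix --- the same fix the paper makes, deployed along a different chain.
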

\begin{proof}
We already have that $r|a_1$ by Proposition \ref{prop:dividesa1}. Assume that, inductively, $r|a_1,\ldots, a_k$ with $1\leq k<t$. Let  $I$ be the ideal $\{s\in R : p^ks=0\}$. It's a proper ideal since $k<t$ and so $I$ is contained in $\mathfrak{m}$.  Thus $(R/I, \mathfrak{m}/I)$ is a finite commutative ring with residue field $R/\mathfrak{m}$. But $(R/I)^+\cong (\Z/p\Z)^{a_{k+1}}\oplus \cdots \oplus (\Z/{p^{t-k}}\Z)^{a_t}$ and so $r|a_{k+1}$ by Proposition \ref{prop:dividesa1}. The result follows by induction.
\end{proof}

Let $p$ be a prime and $m,r\geq 1$.  Then there is a unique (up to isomorphism) finite commutative ring  $R=GR(p^m,r)$ of characteristic $p^m$, called a \emph{Galois ring}, such that $|R|=p^{mr}$ and $R/pR\cong \mathbb F_{p^r}$.  It is a local ring with maximal ideal $pR$.   We do not prove the uniqueness of Galois rings, as we do not need it.  The interested reader is referred to~\cite{Bini02} for details.

\begin{theorem}\label{t:galois.ring}
Let $p$ be a prime and $m,r\geq 1$.  Then there is a finite commutative local ring $R=GR(p^m,r)$ with maximal ideal $pR$ such that $R/pR\cong \mathbb F_{p^r}$ and $R^+\cong (\mathbb Z/p^m\Z)^r$.
\end{theorem}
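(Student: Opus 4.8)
The plan is to realize $GR(p^m,r)$ concretely as a quotient of a polynomial ring over $\Z/p^m\Z$ by a monic polynomial whose reduction modulo $p$ is irreducible, and then to verify each of the four asserted properties in turn. To build the defining polynomial, I would first note that $\mathbb F_{p^r}$ is a simple extension of $\mathbb F_p$, so there is a monic irreducible $\bar h(x)\in \mathbb F_p[x]$ of degree $r$, namely the minimal polynomial of a primitive element. Lift $\bar h$ to a monic $h(x)\in (\Z/p^m\Z)[x]$ of degree $r$ by choosing arbitrary preimages in $\Z/p^m\Z$ of its coefficients under the reduction map $\Z/p^m\Z\to \mathbb F_p$, keeping the leading coefficient equal to $1$. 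Then set $R=(\Z/p^m\Z)[x]/(h(x))$, which is manifestly a finite commutative ring.

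Next I would read off the additive structure and the residue field, both of which are routine. Because $h$ is monic of degree $r$, division with remainder shows that every element of $R$ has a unique representative $c_0+c_1x+\cdots+c_{r-1}x^{r-1}$ with $c_i\in \Z/p^m\Z$, and addition is coordinatewise; hence $R^+\cong (\Z/p^m\Z)^r$ and $|R|=p^{mr}$. For the residue field, reduction modulo $p$ commutes with passing to the quotient, giving $R/pR\cong (\Z/p^m\Z)[x]/(h,p)\cong \mathbb F_p[x]/(\bar h)\cong \mathbb F_{p^r}$, where I use that $(\Z/p^m\Z)/(p)\cong \mathbb F_p$ and that $\bar h$ is irreducible of degree $r$. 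In particular $pR$ is a maximal ideal of $R$, since the quotient $R/pR$ is a field.

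The remaining point, which I expect to be the crux, is that $R$ is \emph{local} with $pR$ as its unique maximal ideal. The key observation is that $p^m=0$ in $R$, so $(pR)^m=p^mR=0$ and therefore $pR$ is nilpotent. Given this, I would argue that every element outside $pR$ is a unit: for $a\in R\setminus pR$, its image in the field $R/pR$ is nonzero, hence invertible, so there exist $b\in R$ and $c\in pR$ with $ab=1+c$; since $c$ is nilpotent, $1+c$ is a unit with inverse the finite sum $\sum_{i\geq 0}(-c)^i$, whence $a$ is a unit. Thus the set of non-units is exactly the proper ideal $pR$, so $R$ is local with maximal ideal $pR$, completing the construction. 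The additive computation and the residue-field identification are direct; the only step requiring genuine input is this locality argument, where the nilpotence of $pR$ is combined with the field structure of $R/pR$ to lift invertibility from the residue field back to $R$.
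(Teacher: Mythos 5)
Your proposal is correct and matches the paper's first (elementary) construction essentially verbatim: lift the minimal polynomial of a primitive element of $\mathbb F_{p^r}$ to a monic polynomial over $\Z/p^m\Z$, quotient, read off $R^+\cong (\Z/p^m\Z)^r$ from monicity, and identify $R/pR\cong \mathbb F_{p^r}$. The only cosmetic difference is in the locality step, where you lift invertibility from the residue field using the geometric series for the nilpotent correction, while the paper notes that the nilpotent ideal $pR$ lies in every maximal ideal and is itself maximal --- two standard, equivalent ways of finishing.
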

\begin{proof}
First we give an elementary construction.  Let $\alpha$ be a primitive element of $\mathbb F_{p^r}$, so that $\mathbb F_{p^r}=\mathbb F_p(\alpha)$.  Let $q(x)\in (\Z/p\Z)[x]$ be the minimal polynomial of $\alpha$; it is a monic polynomial of degree $r$.  We can choose a monic polynomial $Q(x)\in (\Z/p^m\Z)[x]$ of degree $r$ which reduces to $q(x)$ by simply identifying the coefficients of $q(x)$ with integers between $0$ and $p-1$.  Set $R=(\Z/p^m\Z)[x]/(Q(x))$.  First note that since $Q(x)$ is monic, it follows that $(Q(x))$ consists of polynomials of degree at least $r$.  Thus the cosets $1+(Q(x)), x+(Q(x)),\ldots, x^{r-1}+(Q(x))$ are linearly independent over $\Z/p^m\Z$.  Also, as $Q(x)$ is monic, $x^r+(Q(x))$ is in the $\Z/p^m\Z$-span of the cosets $x^j+(Q(x))$  with $0\leq j\leq r-1$ and hence, by induction, so are all cosets $x^t+(Q(x))$ with $t\geq r$.  We conclude that $R^+\cong (\Z/p^m\Z)^r$.  Also $pR$ is a nilpotent ideal of $R$, as $(pR)^m=p^mR=0$, and so $pR$ is contained in every maximal ideal of $R$. But $R/pR\cong (\Z/p\Z)[x]/(q(x))\cong \mathbb F_{p^r}$ is a field, and so $pR$ is a maximal ideal and hence the unique maximal ideal of $R$.  This completes the proof.

We now give a more conceptual construction.
Let $\Q_p$ be the $p$-adic rationals and $\Z_p$ the $p$-adic integers. Take $\zeta$ to be a primitive $p^r-1$ root of unity in an algebraic closure of $\Q_p$.  Then  $\Q_p(\zeta)$ is the unique unramified extension of $\Q_p$ of degree $r$.  The ring of integers in $\Q_p(\zeta)$ is $\mathcal{O}=\Z_p[\zeta]$. Moreover, $\mathcal O$ is a complete discrete valuation ring with maximal ideal $p\mathcal{O}$ and residue field $\mathcal{O}/p\mathcal{O}=\mathbb{F}_{p^r}$.  Details can be found in~\cite[Chpt.~III, Thm.~25]{FrT93} and~\cite[Equation (3.3), Page~136]{FrT93}.
As a $\Z_p$-module, $\mathcal{O}$ is isomorphic to $(\Z_p)^r$ and thus $R=\mathcal{O}/p^m\mathcal{O}$ is a finite commutative local ring with additive group isomorphic to $(\Z/p^m\Z)^r$ and maximal ideal $pR$ with $R/pR\cong \mathbb F_{p^r}$.
\end{proof}

Finally, we are ready to classify which finite abelian groups can be the additive group of a ring with two units whose difference is a unit.

\begin{theorem}
Let $A$ be a finite abelian group. Then there is a finite commutative ring $R$ with $R^+\cong A$ and two elements $a,b\in R^\times$ with $a-b\in R^\times$ if and only if $A\cong A_1 \oplus A_2$ where $A_1$ has odd order and $A_2\cong (\Z/2\Z)^{a_1}\oplus(\Z/{2^2}\Z)^{a_2}\oplus \cdots\oplus (\Z/{2^t}\Z)^{a_t}$ with $a_i\neq 1$ for all $1\leq i \leq t$.
\end{theorem}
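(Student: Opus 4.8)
The plan is to translate the condition ``$R$ contains units $a,b$ with $a-b$ a unit'' into a condition on the residue fields of the local factors of $R$, and then to apply Theorem~\ref{thm:rdividesai} for the necessity of the structural condition and the Galois ring construction of Theorem~\ref{t:galois.ring} for its sufficiency.

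First I would establish the following characterization. Write $R=R_1\times \cdots \times R_n$ with each $(R_i,\mathfrak m_i)$ local. Units of $R$ are exactly the tuples of units, and a difference of two elements is a unit iff it is a unit in each coordinate, so $R$ has units $a,b$ with $a-b$ a unit iff each local factor $R_i$ does. In a local ring an element is a unit iff it lies outside $\mathfrak m_i$, so we need residues $\bar a_i,\bar b_i$ in the residue field $k_i=R_i/\mathfrak m_i$ that are both nonzero and distinct from each other; such a pair exists iff $|k_i|\geq 3$, i.e.\ $k_i\neq \mathbb F_2$. Thus $R$ has the desired units iff no local factor has residue field $\mathbb F_2$ (equivalently, as remarked in the introduction, iff $R$ has no ideal of index $2$).

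For the forward direction, suppose such a ring $R$ exists with $R^+\cong A$. By Proposition~\ref{prop:localorder} each $R_i$ has prime-power order, so $R_i^+$ is a $p_i$-group and the $2$-Sylow subgroup $A_2$ of $A$ is the direct sum of the $R_i^+$ over those $i$ with $p_i=2$. By the characterization above, each such factor has residue field $\mathbb F_{2^{r_i}}$ with $r_i\geq 2$. Writing $R_i^+\cong \bigoplus_j (\Z/2^j\Z)^{a_j^{(i)}}$, Theorem~\ref{thm:rdividesai} gives $r_i\mid a_j^{(i)}$, so each $a_j^{(i)}$ is either $0$ or at least $r_i\geq 2$. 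Since the multiplicity $a_j$ of $\Z/2^j\Z$ in $A_2$ is the sum $\sum_{i:\,p_i=2} a_j^{(i)}$ of nonnegative terms each equal to $0$ or at least $2$, it is never equal to $1$. Taking $A_1$ to be the product of the odd-order factors yields the asserted decomposition.

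For the reverse direction, given $A=A_1\oplus A_2$ as stated, I would build $R$ factor by factor so that every local factor has residue field different from $\mathbb F_2$. For $A_1$, decompose it into cyclic $p$-groups with $p$ odd and realize each summand $\Z/p^j\Z$ by the ring $\Z/p^j\Z$, whose residue field $\mathbb F_p$ has order $\geq 3$. For $A_2\cong \bigoplus_j (\Z/2^j\Z)^{a_j}$ with each $a_j\neq 1$ (hence $0$ or $\geq 2$), realize the summand $(\Z/2^j\Z)^{a_j}$, for each $j$ with $a_j\geq 2$, by the single Galois ring $GR(2^j,a_j)$ of Theorem~\ref{t:galois.ring}: it is local, has additive group $(\Z/2^j\Z)^{a_j}$, and its residue field $\mathbb F_{2^{a_j}}$ has order $2^{a_j}\geq 4$. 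Taking $R$ to be the product of all these rings gives $R^+\cong A$ with no local factor having residue field $\mathbb F_2$; by the characterization, choosing $a=1$ and $b$ with a nonzero residue different from $1$ in each factor produces the required units. The main obstacle is the bookkeeping of the first two paragraphs: correctly reducing to local factors and then matching the divisibility $r_i\mid a_j^{(i)}$ from Theorem~\ref{thm:rdividesai} against the fact that multiplicities add across factors, so that the condition ``$a_j\neq 1$'' is exactly what survives; the reverse construction is then immediate, since any $a_j\geq 2$ is the rank of a Galois ring over $\Z/2^j\Z$ with residue field of size $\geq 4$.
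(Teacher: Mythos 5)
Your proposal is correct and follows essentially the same route as the paper: reduction to the local factors of $R$, the observation that a local factor admits units $a,b$ with $a-b$ a unit exactly when its residue field is not $\mathbb{F}_2$, Theorem~\ref{thm:rdividesai} for necessity, and the Galois rings of Theorem~\ref{t:galois.ring} together with $\Z/p^k\Z$ for odd $p$ for sufficiency. The only difference is organizational: you isolate the residue-field criterion as an explicit first step and derive necessity directly from the additivity of multiplicities across local factors, whereas the paper argues by contradiction starting from a multiplicity-one summand $\Z/2^i\Z$ sitting in a single local factor.
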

\begin{proof}
Note that if the ring $R$ decomposes as $R\cong R_1\times \cdots \times R_n$, then there exist $a$ and $b$ in $R^\times$ with $a-b$ also in $R^\times$ if and only if there exist $a_i$ and $b_i$ in $R_i^\times$ with $a_i-b_i$ in $R^\times$ for all $1\leq i \leq n$. Therefore, to show existence it suffices to find $\Z/{p^k}\Z$ for an odd prime $p$ and $(\Z/{2^m}\Z)^r$ for $r\geq 2$ and $m\geq 1$ as the additive group of a ring with this property.

For the odd prime case, $\Z/{p^k}\Z$, considered as a ring in the standard way, already serves our purpose since $1,-1$ and $2=1-(-1)$ are all units in this ring.

For the even case, 
for $r\geq 2$, let $R=GR(2^m,r)$ as per Theorem~\ref{t:galois.ring}.  Then $R^+\cong (\Z/2^m\Z)^r$, $R$ is local with maximal ideal $2R$ and $R/2R\cong \mathbb F_{2^r}$.   So we can choose $\bar{a}$ and $\bar{b}$ non-zero in $R/2R$ with $\bar{a}-\bar{b}\neq 0$. Let $a$ and $b$ be pre-images of $\bar{a}$ and $\bar{b}$ in $R$. Then $a$, $b$, and $a-b$ are all units of $R$.

We now show that no other finite abelian groups can be the additive group of a ring with two units whose difference is a unit. Suppose $A=A_1\oplus A_2$ with $A_1$ and $A_2$ as in the statement of the theorem. Suppose $R$ is a ring with $A\cong R^+$. Write $R=R_1\times \cdots \times R_n$ with $R_i$ local for $1\leq i \leq n$. If $a_i=1$ for some $i$, then there exists an $R_j$ of even order with $\Z/2^i\Z$ as a direct summand in $R_j^+$ with multiplicity $1$. Then by Theorem \ref{thm:rdividesai}, the residue field of $R_j$ must be $\mathbb{F}_2$. Let $\mathfrak{m}_j$ be the maximal ideal of $R_j$. If $a=(a_1,  \ldots, a_n)$ and $b=(b_1, \ldots, b_n)$ are in $R^\times$ then $a_j+\mathfrak{m}_j=b_j+\mathfrak{m}_j$ as the residue field of $R_j$ is $\mathbb{F}_2$. Thus $a_j-b_j\in \mathfrak{m}_j$ and is not a unit. Consequently, $a-b$ is not a unit. So $R$ does not have two units whose difference is a unit.
\end{proof}


\begin{corollary}
Let $A\cong A_1 \oplus A_2$ be a finite abelian group where $A_1$ has odd order and $A_2\cong (\Z/2\Z)^{a_1}\oplus(\Z/{2^2}\Z)^{a_2}\oplus \cdots\oplus (\Z/{2^t}\Z)^{a_t}$ with $a_i\neq 1$ for all $1\leq i \leq t$.  Then there is a bireversible automaton over an $|A|$-element alphabet with $|A|$ states generating a group isomorphic to $A\wr \mathbb Z$.
\end{corollary}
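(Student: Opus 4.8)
The plan is to assemble the desired automaton directly from the classification theorem just proved, together with the lamplighter realization of Theorem~\ref{thm:lamplighter} and the bireversibility criterion of Theorem~\ref{thm:revbirev}. Since the abelian group $A$ has exactly the structure required by that classification theorem, I would first invoke it to produce a finite commutative ring $R$ with $R^+\cong A$ together with two units $a,b\in R^\times$ whose difference $a-b$ is also a unit.

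With such $R,a,b$ fixed, I would take $r=1$ and form the rational series $f(t)=(1-at)/(1-bt)$ as in~\eqref{eq:ourseries}, and consider its minimal automaton $\atm_f$. The four assertions of the corollary then follow by citing the earlier results in turn. Because $a-b\in R^\times$, Theorem~\ref{thm:lamplighter} gives $\mathbb G(\atm_f)\cong R^+\wr\Z\cong A\wr\Z$; because $a$ and $b$ are both units, Theorem~\ref{thm:revbirev}(iii) shows that $\atm_f$ is bireversible; the alphabet of $\atm_f$ is $R$, hence has $|R|=|A|$ letters; and Lemma~\ref{lem:numberofstates}, whose hypothesis $a-b\in R^\times$ holds, shows that $\atm_f$ has exactly $|R|=|A|$ states.

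I do not expect any genuine obstacle, since all of the mathematical content has been front-loaded into the classification theorem and into Theorems~\ref{thm:lamplighter} and~\ref{thm:revbirev}; what remains is purely to check that the hypotheses match up. The only point worth verifying is that the single ring furnished by the classification theorem simultaneously satisfies the hypothesis $a-b\in R^\times$ demanded by the lamplighter identification and the hypothesis that $a$ and $b$ be units demanded by bireversibility. Since that theorem was stated precisely so as to deliver a triple $(a,b,a-b)$ of units, both requirements hold at once, and the corollary is immediate.
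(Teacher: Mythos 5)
Your proposal is correct and is precisely the argument the paper intends: the corollary is stated without proof because, exactly as you say, it follows immediately by combining the classification theorem (to get $R$ with $R^+\cong A$ and units $a,b,a-b$), Theorem~\ref{thm:lamplighter} for the isomorphism $\mathbb G(\atm_f)\cong R^+\wr\Z$, Theorem~\ref{thm:revbirev}(iii) for bireversibility, and Lemma~\ref{lem:numberofstates} for the state count, with the alphabet being $R$ itself. Your one point of care --- that a single ring simultaneously supplies all three unit conditions --- is indeed the only hypothesis-matching to check, and it holds by construction.
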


\begin{remark}
Note that if $n$ is even, then there is no finite commutative ring $R$ with $R^+\cong \mathbb Z/n\mathbb Z$ containing two units $a,b$ with $a-b$ a unit.  So we cannot realize the lamplighter group $(\mathbb Z/n\mathbb Z)\wr \mathbb Z$ using bireversible automata via our methods when $n$ is even.
\end{remark}

\section{Examples}
Recall that for a fixed $r$, $a$, and $b$ in our finite commutative ring $R$, the states of $\atm_f$, for \[f=r\left(\frac{1-at}{1-bt}\right),\] are $\{\alpha_{-sra}\mu_f\alpha_{sb} \mid s\in R\}$. If $a-b$ is a unit of $R$, then different $s\in R$ yield different states by Lemma~\ref{lem:numberofstates}. For this reason, in all of the following figures and tables we use $s$ to denote the state $\alpha_{-sra}\mu_f\alpha_{sb}$.  With this notation, transitions are given as in \eqref{eq:transitions}.

\begin{example}
As a first example we show that the power series method can be used to recreate the bireversible automaton given in \cite{bdr16} for the group $\Z/3\Z \wr \Z$.  This was already observed by Bondarenko and Savchuk in their work on rational series over fields. We use $f(t)=2\left(\frac{1-2t}{1-t}\right)$ in $(\Z/3\Z)\llbracket t \rrbracket$. With $r=2, a=2$, and $b=1$, we see that the states of $\atm_f$ are of the form $\alpha_{-s}\mu_f\alpha_s$. The output function is given by $\lambda(s, \tilde{s})=-s+2(s+\tilde{s})$ and the transition function is given by $\delta(s, \tilde{s})=\alpha_{-(s+\tilde{s})}\mu_f \alpha_{s +\tilde{s}}$ for any $\tilde{s}\in \Z/3\Z$. As was observed by Bondarenko, D'Angeli, and Rodaro, this automaton is equivalent to its dual. See Figure~\ref{fig:BDR}.

\begin{figure}[htbp]
\begin{center}
\begin{tikzpicture}[->,shorten >=1pt, auto, node distance=5.5cm,semithick,
inner sep=1pt, bend angle=27,scale=.5,font=\footnotesize]
\tikzset{every state/.style={minimum size=0.75cm}}.
\node [state]   (A) {$0$};
\node [state]   (C) [above right of=A] {$2$};
\node [state]   (B) [below right of=C] {$1$};

\path  (A) edge [in=180, out=240, loop] node [below left] {$0\mid 0$} (A)
(A) edge [bend right]  node [below]  {$1 \mid 2$} (B)
(A) edge  node [below right]  {$2 \mid 1$} (C)
(B) edge [in=300, out=0, loop] node [below right] {$0\mid 1$} (B)
(B) edge node [above]  {$2 \mid 2$} (A)
(B) edge [bend right]  node [above right]  {$1 \mid 0$} (C)
(C) edge [in=60, out=120, loop] node [above] {$0\mid 2$} (C)
(C) edge [bend right]  node [above left]  {$1 \mid 1$} (A)
(C) edge node [below left]  {$2 \mid 0$} (B);
\end{tikzpicture}
\end{center}
\caption{The bireversible automata given by Bodarenko, D'Angeli, Rodaro for $\Z/3\Z\wr \Z$ and corresponding to $f(t)=2\left(\dfrac{1-2t}{1-t}\right)$.\label{fig:BDR}}
\end{figure}
\end{example}

\begin{example}
Let $R=\Z/6\Z$ with the standard ring structure and take $a=3$, $b=2$, and $r=1$ so that neither $a$ nor $b$ is a unit. Therefore, for $f=\dfrac{1-3t}{1-2t}$, both $\atm_f$ and $\atm_{f^{-1}}$ are not reversible by Theorem~\ref{thm:revbirev}. Here, the states are given by $\alpha_{3s}\mu_f\alpha_{2s}$. The transition function is given by
\[\delta(\alpha_{3s}\mu_f\alpha_{2s}, \tilde{s})=\alpha_{3(2s+\tilde{s})}\mu_f\alpha_{2(2s+\tilde{s})}=\alpha_{3\tilde{s}}\mu_f\alpha_{4s+\tilde{s}}\]
 and the output function is given by
 \[\lambda(\alpha_{3s}\mu_f\alpha_{2s}, \tilde{s})=\tilde{s}+5s=\tilde{s}-s.\] It is straightforward to check that $\delta_x(\alpha_{3s}\mu_f\alpha_{2s})=\delta_x(\alpha_{3s'}\mu_f\alpha_{2s'})$ whenever $s \equiv s' \bmod 3$, which verifies that $\atm_f$ is not reversible. See Figure~\ref{fig:strongnotreversible}.
\begin{figure}[htbp]
\begin{center}
\begin{tikzpicture}[->,shorten >=1pt, auto, node distance=4.25cm,semithick,
inner sep=1pt, bend angle=26,font=\tiny,scale=.5]
\tikzset{every state/.style={minimum size=0}}.
\node [state]   (A) {$0$};
\node [state]   (B) [above right of=A] {$1$};
\node [state]   (C) [right of=B] {$2$};
\node [state]   (D) [below right of=C] {$3$};
\node [state]   (E) [below left of=D] {$4$};
\node [state]   (F) [left of=E] {$5$};
\path
(A) edge [bend left]  node [above left]  {$1 \mid 1$} (B)
(B) edge node [above left]  {$4 \mid 3$} (A)
(B) edge [bend left]  node [above]  {$0 \mid 5$} (C)
(C) edge node [above]  {$3 \mid 1$} (B)
(C) edge [bend left]  node [above right]  {$5 \mid 3$} (D)
(D) edge node [above right]  {$2 \mid 5$} (C)
(D) edge [bend left]  node [pos=.6, above left]  {$4 \mid 1$} (E)
(E) edge node [pos=.4, above left]  {$1 \mid 3$} (D)
(E) edge [bend left]  node [above]  {$3 \mid 5$} (F)
(F) edge node [above]  {$0 \mid 1$} (E)
(F) edge [bend left]  node [pos=.4, above right]  {$2 \mid 3$} (A)
(A) edge node [pos=.6, above right]  {$5 \mid 5$} (F)
(A) edge [out=35, in=190] node [pos=.52,above left]  {$2 \mid 2$} (C)
(C) edge node [pos=.49, above left]  {$2 \mid 0$} (A)
(B) edge [out=350, in=145] node [pos=.52, above]  {$1 \mid 0$} (D)
(D) edge node [pos=.47, above]  {$1 \mid 4$} (B)
(C) edge [out=290, in=70] node [pos=.55, below right]  {$0 \mid 4$} (E)
(E) edge node [pos=.45, below right]  {$0 \mid 2$} (C)
(D) edge [out=215, in=10] node [pos=.475, above]  {$5 \mid 2$} (F)
(F) edge node [pos=.54, above]  {$5 \mid 0$} (D)
(E) edge [out=170, in=325] node [pos=.525, above]  {$4 \mid 0$} (A)
(A) edge node [pos=.46, above]  {$4 \mid 4$} (E)
(F) edge [out=110, in=250] node [pos=.45, below left]  {$3 \mid 4$} (B)
(B) edge node [pos=.55, below left]  {$3 \mid 2$} (F)
(A) edge [out=10, in=170] node [pos=.6, above]  {$3 \mid 3$} (D)
(D) edge node [pos=.4, above]  {$0 \mid 3$} (A)
(B) edge [out=320, in=110] node [pos=.3, above right]  {$2 \mid 1$} (E)
(E) edge node [pos=.65, above right]  {$5 \mid 1$} (B)
(C) edge node [pos=.57, below right]  {$1 \mid 5$} (F)
(F) edge [out=70, in=225] node [pos=.43, below right]  {$4 \mid 5$} (C)
(A) edge [in=150, out=210, loop] node [above left] {$0\mid 0$} (A)
(B) edge [in=90, out=150, loop] node [above left] {$5\mid 4$} (B)
(C) edge [in=30, out=90, loop] node [above right] {$4\mid 2$} (C)
(D) edge [in=330, out=30, loop] node [above right] {$3\mid 0$} (D)
(E) edge [in=270, out=330, loop] node [below right] {$2\mid 4$} (E)
(F) edge [in=210, out=270, loop] node [below left] {$1\mid 2$} (F);
\end{tikzpicture}
\end{center}
\caption{An automaton which generates $\Z/6\Z\wr\Z$ that is not reversible and whose inverse is also not reversible.\label{fig:strongnotreversible}}
\end{figure}

\end{example}

\begin{example}
Our next example is that of a bireversible automaton that generates $A \wr \Z$ where $A$ is the additive group of a ring but not of a field. We take $A=\Z/9\Z$ endowed with the standard ring structure. Taking $r=2$, $a=1$, and $b=2$, we have that $a-b=-1$, and so $a$, $b$, and $a-b$ are all units. For these values of $r$, $a$, and $b$, the states of $\atm_f$ are $\alpha_{-2s}\mu_f\alpha_{2s}$. The transition and output functions are given by
\[\delta(\alpha_{-2s}\mu_f\alpha_{2s}, \tilde{s})=\alpha_{-2(2s+\tilde{s})}\mu_f\alpha_{2(2s+\tilde{s})}\]
and
\[\lambda(\alpha_{-2s}\mu_f\alpha_{2s}, \tilde{s})=2(s+\tilde{s}).\]

Rather than drawing the $\atm_f$ for $f=2\left(\dfrac{1-t}{1-2t}\right)$ which has $9$ states and a $9$ letter alphabet, we describe the transition and output functions using Tables~\ref{tab:nonfieldbirevtrans} and~\ref{tab:nonfieldbirevout}.

\begin{table}[htbp]
\scalebox{0.8}{
\begin{tabular}{|c|| c | c | c | c | c | c | c | c | c|}
\hline
state $\backslash$ letter & 0 & 1 & 2 & 3 & 4 & 5 & 6 & 7 & 8\\
\hline \hline
0 & 0 & 1 & 2 & 3 & 4 & 5 & 6 & 7 & 8\\
\hline
1 & 2 & 3 & 4 & 5 & 6 & 7 & 8 & 0 & 1\\
\hline
2 & 4 & 5 & 6 & 7 & 8 & 0 & 1 & 2 & 3\\
\hline
3 & 6 & 7 & 8 & 0 & 1 & 2 & 3 & 4 & 5\\
\hline
4 & 8 & 0 & 1 & 2 & 3 & 4 & 5 & 6 & 7\\
\hline
5 & 1 & 2 & 3 & 4 & 5 & 6 & 7 & 8 & 0\\
\hline
6 & 3 & 4 & 5 & 6 & 7 & 8 & 0 & 1 & 2\\
\hline
7  & 5 & 6 & 7 & 8 & 0 & 1 & 2 & 3 & 4\\
\hline
8 & 7 & 8 & 0 & 1 & 2 & 3 & 4 & 5 & 6\\
\hline
\end{tabular}}
\caption{The transition table for $f=2\left(\dfrac{1-t}{1-2t}\right)$ over $\Z/9\Z$. \label{tab:nonfieldbirevtrans}}
\end{table}

\begin{table}[htbp]
\scalebox{0.8}{
\begin{tabular}{|c|| c | c | c | c | c | c | c | c | c|}
\hline
state $\backslash$ letter & 0 & 1 & 2 & 3 & 4 & 5 & 6 & 7 & 8\\
\hline \hline
0 & 0 & 2 & 4 & 6 & 8 & 1 & 3 & 5 & 7\\
\hline
1 & 2 & 4 & 6 & 8 & 1 & 3 & 5 & 7 & 0\\
\hline
2 & 4 & 6 & 8 & 1 & 3 & 5 & 7 & 0 & 2\\
\hline
3 & 6 & 8 & 1 & 3 & 5 & 7 & 0 & 2 & 4\\
\hline
4 & 8 & 1 & 3 & 5 & 7 & 0 & 2 & 4 & 6\\
\hline
5 & 1 & 3 & 5 & 7 & 0 & 2 & 4 & 6 & 8\\
\hline
6 & 3 & 5 & 7 & 0 & 2 & 4 & 6 & 8 & 1\\
\hline
7  & 5 & 7 & 0 & 2 & 4 & 6 & 8 & 1 & 3\\
\hline
8 & 7 & 0 & 2 & 4 & 6 & 8 & 1 & 3 & 5\\
\hline
\end{tabular}}
\caption{The output table for $f=2\left(\dfrac{1-t}{1-2t}\right)$ over $\Z/9\Z$. \label{tab:nonfieldbirevout}}
\end{table}

\end{example}

\begin{example}
As a final example, we construct a bireversible automaton generating $(\Z/4\Z)^2\wr \Z$ using the ring described in Theorem~\ref{t:galois.ring}. In other words, we take $\mathcal{O}=\Z_2[\zeta]$ with $\zeta$ a third root of unity and $R=\mathcal{O}/4\mathcal{O}\cong \Z/4\Z[\zeta]$. Using the isomorphism $R \cong (\Z/4\Z[x])/(1+x+x^2)$ and taking $r=1$, $a=1$, and $b=2+\zeta$, we find that $a$, $b$, and $a-b$ are all units with inverses $1$, $3+\zeta$, and $\zeta$ respectively. With this choice of $r$, $a$, and $b$, the transition and output tables are given by Tables~\ref{tab:galringtrans} and~\ref{tab:galringout} respectively for $f=r\left(\dfrac{1-at}{1-bt}\right)$.

\begin{landscape}
\begin{table}[htbp]
\scalebox{0.8}{
\begin{tabular}{|c|| c | c | c | c | c | c | c | c | c | c | c | c |  c | c | c | c |}
\hline
state $\backslash$ letter & $0$ & $1$ & $2$ & $3$ & $\zeta$ & $1+\zeta$ & $2+\zeta$ & $3+\zeta$ & $2\zeta$ & $1+2\zeta$ & $2+2\zeta$ & $3+2\zeta$ & $3\zeta$ & $1+3\zeta$ & $2+3\zeta$ & $3+3\zeta$\\
\hline \hline
$0$ & $0$ & $1$ & $2$ & $3$ & $\zeta$ & $1+\zeta$ & $2+\zeta$ & $3+\zeta$ & $2\zeta$ & $1+2\zeta$ & $2+2\zeta$ & $3+2\zeta$ & $3\zeta$ & $1+3\zeta$ & $2+3\zeta$ & $3+3\zeta$\\
\hline
$1$ & $2+\zeta$ & $3+\zeta$ & $\zeta$ & $1+\zeta$ & $2+2\zeta$ & $3+2\zeta$ & $2\zeta$ & $1+2\zeta$ & $2+3\zeta$ & $3+3\zeta$ & $3\zeta$ & $1+3\zeta$ & $2$ & $3$ & $0$ & $1$\\
\hline
$2$ & $2\zeta$ & $1+2\zeta$ & $2+2\zeta$ & $3+2\zeta$ & $3\zeta$ & $1+3\zeta$ & $2+3\zeta$ & $3+3\zeta$ & $0$ & $1$ & $2$ & $3$ & $\zeta$ & $1+\zeta$ & $2+\zeta$ & $3+\zeta$\\
\hline
$3$ & $2+3\zeta$ & $3+3\zeta$ & $3\zeta$ & $1+3\zeta$ & $2$ & $3$ & $0$ & $1$ & $2+\zeta$ & $3+\zeta$ & $\zeta$ & $1+\zeta$ & $2+2\zeta$ & $3+2\zeta$ & $2\zeta$ & $1+2\zeta$\\
\hline
$\zeta$ & $3+\zeta$ & $\zeta$ & $1+\zeta$ & $2+\zeta$ & $3+2\zeta$ & $2\zeta$ & $1+2\zeta$ & $2+2\zeta$ & $3+3\zeta$ & $3\zeta$ & $1+3\zeta$ & $2+3\zeta$ & $3$ & $0$ & $1$ & $2$\\
\hline
$1+\zeta$ & $1+2\zeta$ & $2+2\zeta$ & $3+2\zeta$ & $2\zeta$ & $1+3\zeta$ & $2+3\zeta$ & $3+3\zeta$ & $3\zeta$ & $1$ & $2$ & $3$ & $0$ & $1+\zeta$ & $2+\zeta$ & $3+\zeta$ & $\zeta$\\
\hline
$2+\zeta$ & $3+3\zeta$ & $3\zeta$ & $1+3\zeta$ & $2+3\zeta$ & $3$ & $0$ & $1$ & $2$ & $3+\zeta$ & $\zeta$ & $1+\zeta$ & $2+\zeta$ & $3+2\zeta$ & $2\zeta$ & $1+2\zeta$ & $2+2\zeta$\\
\hline
$3+\zeta$ & $1$ & $2$ & $3$ & $0$ & $1+\zeta$ & $2+\zeta$ & $3+\zeta$ & $\zeta$ & $1+2\zeta$ & $2+2\zeta$ & $3+2\zeta$ & $2\zeta$ & $1+3\zeta$ & $2+3\zeta$ & $3+3\zeta$ & $3\zeta$\\
\hline
$2\zeta$ & $2+2\zeta$ & $3+2\zeta$ & $2\zeta$ & $1+2\zeta$ & $2+3\zeta$ & $3+3\zeta$ & $3\zeta$ & $1+3\zeta$ & $2$ & $3$ & $0$ & $1$ & $2+\zeta$ & $3+\zeta$ & $\zeta$ & $1+\zeta$\\
\hline
$1+2\zeta$ & $3\zeta$ & $1+3\zeta$ & $2+3\zeta$ & $3+3\zeta$ & $0$ & $1$ & $2$ & $3$ & $\zeta$ & $1+\zeta$ & $2+\zeta$ & $3+\zeta$ & $2\zeta$ & $1+2\zeta$ & $2+2\zeta$ & $3+2\zeta$\\
\hline
$2+2\zeta$ & $2$ & $3$ & $0$ & $1$ & $2+\zeta$ & $3+\zeta$ & $\zeta$ & $1+\zeta$ & $2+2\zeta$ & $3+2\zeta$ & $2\zeta$ & $1+2\zeta$ & $2+3\zeta$ & $3+3\zeta$ & $3\zeta$ & $1+3\zeta$\\
\hline
$3+2\zeta$ & $\zeta$ & $1+\zeta$ & $2+\zeta$ & $3+\zeta$ & $2\zeta$ & $1+2\zeta$ & $2+2\zeta$ & $3+2\zeta$ & $3\zeta$ & $1+3\zeta$ & $2+3\zeta$ & $3+3\zeta$ & $0$ & $1$ & $2$ & $3$\\
\hline
$3\zeta$ & $1+3\zeta$ & $2+3\zeta$ & $3+3\zeta$ & $3\zeta$ & $1$ & $2$ & $3$ & $0$ & $1+\zeta$ & $2+\zeta$ & $3+\zeta$ & $\zeta$ & $1+2\zeta$ & $2+2\zeta$ & $3+2\zeta$ & $2\zeta$\\
\hline
$1+3\zeta$ & $3$ & $0$ & $1$ & $2$ & $3+\zeta$ & $\zeta$ & $1+\zeta$ & $2+\zeta$ & $3+2\zeta$ & $2\zeta$ & $1+2\zeta$ & $2+2\zeta$ & $3+3\zeta$ & $3\zeta$ & $1+3\zeta$ & $2+3\zeta$\\
\hline
$2+3\zeta$ & $1+\zeta$ & $2+\zeta$ & $3+\zeta$ & $\zeta$ & $1+2\zeta$ & $2+2\zeta$ & $3+2\zeta$ & $2\zeta$ & $1+3\zeta$ & $2+3\zeta$ & $3+3\zeta$ & $3\zeta$ & $1$ & $2$ & $3$ & $0$\\
\hline
$3+3\zeta$ & $3+2\zeta$ & $2\zeta$ & $1+2\zeta$ & $2+2\zeta$ & $3+3\zeta$ & $3\zeta$ & $1+3\zeta$ & $2+3\zeta$ & $3$ & $0$ & $1$ & $2$ & $3+\zeta$ & $\zeta$ & $1+\zeta$ & $2+\zeta$\\
\hline

\end{tabular}}
\caption{The transition table for $f=r\left(\dfrac{1-at}{1-bt}\right)$ with $r=1$, $a=1$, and $b=2+\zeta$.}\label{tab:galringtrans}
\end{table}
\end{landscape}

\begin{landscape}
\begin{table}[htbp]
\scalebox{0.8}{
\begin{tabular}{|c|| c | c | c | c | c | c | c | c | c | c | c | c |  c | c | c | c |}
\hline
state $\backslash$ letter & $0$ & $1$ & $2$ & $3$ & $\zeta$ & $1+\zeta$ & $2+\zeta$ & $3+\zeta$ & $2\zeta$ & $1+2\zeta$ & $2+2\zeta$ & $3+2\zeta$ & $3\zeta$ & $1+3\zeta$ & $2+3\zeta$ & $3+3\zeta$\\
\hline \hline
$0$ & $0$ & $1$ & $2$ & $3$ & $\zeta$ & $1+\zeta$ & $2+\zeta$ & $3+\zeta$ & $2\zeta$ & $1+2\zeta$ & $2+2\zeta$ & $3+2\zeta$ & $3\zeta$ & $1+3\zeta$ & $2+3\zeta$ & $3+3\zeta$\\
\hline
$1$ & $1+\zeta$ & $2+\zeta$ & $3+\zeta$ & $\zeta$ & $1+2\zeta$ & $2+2\zeta$ & $3+2\zeta$ & $2\zeta$ & $1+3\zeta$ & $2+3\zeta$ & $3+3\zeta$ & $3\zeta$ & $1$ & $2$ & $3$ & $0$\\
\hline
$2$ & $2+2\zeta$ & $3+2\zeta$ & $2\zeta$ & $1+2\zeta$ & $2+3\zeta$ & $3+3\zeta$ & $3\zeta$ & $1+3\zeta$ & $2$ & $3$ & $0$ & $1$ & $2+\zeta$ & $3+\zeta$ & $\zeta$ & $1+\zeta$\\
\hline
$3$ & $3+3\zeta$ & $3\zeta$ & $1+3\zeta$ & $2+3\zeta$ & $3$ & $0$ & $1$ & $2$ & $3+\zeta$ & $\zeta$ & $1+\zeta$ & $2+\zeta$ & $3+2\zeta$ & $2\zeta$ & $1+2\zeta$ & $2+2\zeta$\\
\hline
$\zeta$ & $3$ & $0$ & $1$ & $2$ & $3+\zeta$ & $\zeta$ & $1+\zeta$ & $2+\zeta$ & $3+2\zeta$ & $2\zeta$ & $1+2\zeta$ & $2+2\zeta$ & $3+3\zeta$ & $3\zeta$ & $1+3\zeta$ & $2+3\zeta$\\
\hline
$1+\zeta$ & $\zeta$ & $1+\zeta$ & $2+\zeta$ & $3+\zeta$ & $2\zeta$ & $1+2\zeta$ & $2+2\zeta$ & $3+2\zeta$ & $3\zeta$ & $1+3\zeta$ & $2+3\zeta$ & $3+3\zeta$ & $0$ & $1$ & $2$ & $3$\\
\hline
$2+\zeta$ & $1+2\zeta$ & $2+2\zeta$ & $3+2\zeta$ & $2\zeta$ & $1+3\zeta$ & $2+3\zeta$ & $3+3\zeta$ & $3\zeta$ & $1$ & $2$ & $3$ & $0$ & $1+\zeta$ & $2+\zeta$ & $3+\zeta$ & $\zeta$\\
\hline
$3+\zeta$ & $2+3\zeta$ & $3+3\zeta$ & $3\zeta$ & $1+3\zeta$ & $2$ & $3$ & $0$ & $1$ & $2+\zeta$ & $3+\zeta$ & $\zeta$ & $1+\zeta$ & $2+2\zeta$ & $3+2\zeta$ & $2\zeta$ & $1+2\zeta$\\
\hline
$2\zeta$ & $2$ & $3$ & $0$ & $1$ & $2+\zeta$ & $3+\zeta$ & $\zeta$ & $1+\zeta$ & $2+2\zeta$ & $3+2\zeta$ & $2\zeta$ & $1+2\zeta$ & $2+3\zeta$ & $3+3\zeta$ & $3\zeta$ & $1+3\zeta$\\
\hline
$1+2\zeta$ & $3+\zeta$ & $\zeta$ & $1+\zeta$ & $2+\zeta$ & $3+2\zeta$ & $2\zeta$ & $1+2\zeta$ & $2+2\zeta$ & $3+3\zeta$ & $3\zeta$ & $1+3\zeta$ & $2+3\zeta$ & $3$ & $0$ & $1$ & $2$\\
\hline
$2+2\zeta$ & $2\zeta$ & $1+2\zeta$ & $2+2\zeta$ & $3+2\zeta$ & $3\zeta$ & $1+3\zeta$ & $2+3\zeta$ & $3+3\zeta$ & $0$ & $1$ & $2$ & $3$ & $\zeta$ & $1+\zeta$ & $2+\zeta$ & $3+\zeta$\\
\hline
$3+2\zeta$ & $1+3\zeta$ & $2+3\zeta$ & $3+3\zeta$ & $3\zeta$ & $1$ & $2$ & $3$ & $0$ & $1+\zeta$ & $2+\zeta$ & $3+\zeta$ & $\zeta$ & $1+2\zeta$ & $2+2\zeta$ & $3+2\zeta$ & $2\zeta$\\
\hline
$3\zeta$ & $1$ & $2$ & $3$ & $0$ & $1+\zeta$ & $2+\zeta$ & $3+\zeta$ & $\zeta$ & $1+2\zeta$ &  $2+2\zeta$ & $3+2\zeta$ & $2\zeta$ & $1+3\zeta$ & $2+3\zeta$ & $3+3\zeta$ & $3\zeta$\\
\hline
$1+3\zeta$ & $2+\zeta$ & $3+\zeta$ & $\zeta$ & $1+\zeta$ & $2+2\zeta$ & $3+2\zeta$ & $2\zeta$ & $1+2\zeta$ & $2+3\zeta$ & $3+3\zeta$ & $3\zeta$ & $1+3\zeta$ & $2$ & $3$ & $0$ & $1$\\
\hline
$2+3\zeta$ & $3+2\zeta$ & $2\zeta$ & $1+2\zeta$ & $2+2\zeta$ & $3+3\zeta$ & $3\zeta$ & $1+3\zeta$ & $2+3\zeta$ & $3$ & $0$ & $1$ & $2$ & $3+\zeta$ & $\zeta$ & $1+\zeta$ & $2+\zeta$\\
\hline
$3+3\zeta$ & $3\zeta$ & $1+3\zeta$ & $2+3\zeta$ & $3+3\zeta$ & $0$ & $1$ & $2$ & $3$ & $\zeta$ & $1+\zeta$ & $2+\zeta$ & $3+\zeta$ & $2\zeta$ & $1+2\zeta$ & $2+2\zeta$ & $3+2\zeta$\\
\hline

\end{tabular}}
\caption{The output table for $f=r\left(\dfrac{1-at}{1-bt}\right)$ with $r=1$, $a=1$, and $b=2+\zeta$.}\label{tab:galringout}
\end{table}
\end{landscape}

\end{example}

\subsection*{Acknowledgments} We would like to thank Marcin Mazur for his many helpful suggestions.

\bibliographystyle{alpha}

\end{document}